\theoremstyle{plain}
\newtheorem{theo}{Theorem}
\newtheorem{madef}{Definition}
\newtheorem{prop}{Proposition}
\newtheorem{lemma}{Lemma}
\newcommand{\un}{\mathbf{1}}
\def\pentsup#1{\left\lceil #1 \right\rceil}
\def\pentinf#1{\left\lfloor #1 \right\rfloor}
\renewcommand{\P}{\mathbb{P}}
\newcommand{\F}{\mathcal{F}}
\newcommand{\N}{\mathbb{N}}
\newcommand{\Z}{\mathbb{Z}}
\newcommand{\E}{\mathbb{E}}
\title[]{A probabilistic cellular automaton that admits no successful basic i.i.d. coupling}
\author{Jean B\'erard}
\address{Institut de Recherche Math\'ematique Avanc\'ee, UMR 7501, Universit\'e de Strasbourg et CNRS, 7 rue Ren\'e Descartes, 67\,000 Strasbourg, France}
\email{jberard@unistra.fr}
\begin{document}
\begin{abstract}
In this paper, we revisit a classic example of probabilistic cellular automaton (PCA) on $\{0,1\}^{\Z}$, namely, addition modulo 2 of the states of the left- and right-neighbouring cells, followed by either preserving the result of the addition, with probability $p$, or flipping it, with probability $1-p$. It is well-known that, for any value of $p \in ]0,1[$, this PCA is ergodic. We show that, for $p$ sufficiently close to $1$, no coupling of the PCA dynamics based on the composition of i.i.d. random functions of nearest-neighbour states (we call this a basic i.i.d. coupling), can be successful, where successful means that, for any given cell, the probability that every possible initial condition leads to the same state after $t$ time steps, goes to $1$ as $t$ goes to infinity. In particular, this precludes the possibility of a CFTP scheme being based on such a coupling. This property stands in sharp contrast with the case of monotone PCA, for which, as soon as ergodicity holds, there exists a successful basic i.i.d. coupling.   
\end{abstract}

\maketitle

\section{Introduction}

\subsection{PCA and coupling}
Probabilistic cellular automata (PCA) are discrete-time Markov processes formed by a lattice of stochastically evolving cells with local interaction (see e.g. \cite{TooVasStaMitKurPir,FerLouNar}). One of the key questions regarding their dynamical behaviour is that of ergodicity: does the distribution of the state of the PCA converge to a limit as time goes to infinity ? Among the tools that may be used to establish ergodicity is {\bf coupling}: loosely speaking, the idea is to define the various versions of the PCA starting from distinct initial configurations as a family of random variables on a common probability space, and then to show that, for large times, two such versions are, with high probability, close to each other. Obviously, the success of this approach relies on the ability to find a suitable coupling. Due to the local nature of the interaction, PCA naturally come with a family of couplings in which the update mechanism of the PCA is produced, at each time-step, by applying at each cell a random function of the current state of this cell and of its nearest neighbours, using an i.i.d. family of functions over different cells and time-steps. We call such a coupling a {\bf basic i.i.d. coupling}. Remarkably, it turns out that, for monotone PCA (see \cite{vdBSte}), a successful basic i.i.d. coupling exists as soon as the PCA is ergodic. This raises the question of whether a successful basic i.i.d. coupling always exists for ergodic PCA, regardless of their monotonicity. The present paper is devoted to showing, by means of a specific counterexample, that the answer is negative. Let us recall that, in \cite{Ber}, it is proved that sufficiently ergodic one-dimensional PCA always admit a locally defined successful coupling, albeit one that is very different from a basic i.i.d. coupling. The present counterexample shows that one must indeed look beyond basic i.i.d. couplings to get such general existence results.

\subsection{Main result}
We now turn to a precise statement of our result. Consider the function $f \   :   \   \{0,1\}^2 \to \{0,1\}$ defined by 
\begin{equation}\label{e:def-f}f(\alpha,\beta)=(\alpha+\beta) \mbox{ mod }2.\end{equation}
Then consider a one-dimensional nearest-neighbour probabilistic cellular automaton $(X_t)_{t \in \N}$ on $\{0,1\}^{\Z}$, with $X_t=(X_t(x))_{x \in \Z}$, whose Markov chain dynamics on $\{0,1\}^{\Z}$ is characterized as follows. 
\begin{itemize}
\item[\textbullet] For all $t \in \N$ and $x \in \Z$, the conditional distribution of $X_{t+1}(x)$ given $X_t$ satisfies
$$X_{t+1}(x)= \left\lbrace \begin{array}{ll}f(X_{t}(x-1), X_t(x+1)) & \mbox{ with probability }p \\ 1-f(X_{t}(x-1), X_t(x+1)) & \mbox{ with probability }1-p \end{array}\right. .$$
\item[\textbullet] For all $t \in \N$, the family of random variables $(X_{t+1}(x))_{x \in \Z}$ is conditionally independent given $X_t$.
 \end{itemize}
A local update function for the PCA dynamics is a random function $F \ : \ \{0,1\}^3 \to \{0,1\}$ such that 
\begin{equation}\label{e:update-fonc}  \forall (a,b,c) \in \{0,1\}^3, \   F(a,b,c) \sim p \cdot \delta_{f(a,c)} + (1-p) \cdot \delta_{1-f(a,c)}.\end{equation}
A basic i.i.d. coupling of the PCA dynamics is defined through an  i.i.d. family $(F_{x,t})_{x \in \Z, t \in \N}$ of local update functions, and the explicit equation: \begin{equation}\label{e:flot} X_{t+1}(x) = F_{x,t}(X_{t}(x-1), X_t(x), X_t(x+1)).\end{equation}
Starting from an initial configuration $X_0=\xi \in \{0,1\}^{\Z}$ at time $0$, we denote by $X_t^{\xi}$ the (random) configuration at time $t$ resulting from iteratively applying \eqref{e:flot} from time $0$ to time $t$.
Our main result is the following. 
 \begin{theo}\label{t:letheo}
For all $p$ close enough to $1$, every coupling characterized by \eqref{e:update-fonc} and \eqref{e:flot} is such that
$$\limsup_{t \to +\infty} \P \left( \exists \ \xi_1, \xi_2 \in \{0,1\}^{\Z} \mbox{ such that } X_t^{\xi_1}(0) \neq X_t^{\xi_2}(0) \right) > 0.$$ 
\end{theo}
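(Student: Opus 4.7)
The strategy exploits the $\mathbb{F}_2$-linearity of $f(a,c) = a \oplus c$. Given two initial conditions $\xi^0, \xi^1$, set $Y_t := X_t^{\xi^0} \oplus X_t^{\xi^1}$. Decomposing $F_{x,t}(a,b,c) = (a \oplus c) \oplus N_{x,t}(a,b,c)$, where the marginal constraint \eqref{e:update-fonc} forces $N_{x,t}(a,b,c) \sim \mathrm{Ber}(1-p)$ for each triple, a direct calculation shows
\begin{equation*}
Y_{t+1}(x) \;=\; Y_t(x-1) \oplus Y_t(x+1) \oplus E_{x,t},
\end{equation*}
where $E_{x,t} := N_{x,t}(\mathbf{a}_{x,t}) \oplus N_{x,t}(\mathbf{a}_{x,t} \oplus \mathbf{y}_{x,t})$ and $\mathbf{a}_{x,t}, \mathbf{y}_{x,t}$ denote the length-$3$ windows of $X^{\xi^0}_t$ and $Y_t$ centred at $x$. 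A union bound over two $\mathrm{Ber}(1-p)$ marginals yields $\P(E_{x,t} = 1 \mid \mathcal{F}_t) \le 2(1-p)$, where $\mathcal{F}_t := \sigma(F_{z,s} : s < t)$; moreover, conditionally on $\mathcal{F}_t$ the family $(E_{z,t})_z$ is independent, inherited from the i.i.d.\ structure of $(F_{z,t})_z$.

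Iterating the $\mathbb{F}_2$-linear recursion produces the Duhamel-type identity
\begin{equation*}
Y_t(0) \;=\; \hat{Y}_t(0) \oplus M_t, \qquad M_t \;:=\; \sum_{0 \le s < t,\; z \in \Z} a_{t-s-1,\,z}\, E_{z,s} \pmod{2},
\end{equation*}
where $a_{u,v}$ are the Pascal mod-$2$ coefficients (the Green function of the deterministic Rule 90) and $\hat{Y}_t$ is the Rule 90 evolution of $Y_0 = \xi^0 \oplus \xi^1$. Choosing $\xi^0 \equiv 0$ and $\xi^1$ equal to the indicator of site $t$ gives $\hat{Y}_t(0) = a_{t,t} = 1$, so that the event in the theorem statement contains $\{X_t^{\xi^0}(0) \neq X_t^{\xi^1}(0)\} = \{M_t = 0\}$, and it suffices to show $\liminf_{t \to \infty} \P(M_t = 0) > 0$.

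The heart of the argument — and the main obstacle — is controlling the $\mathbb{F}_2$-sum $M_t$, whose number of non-trivial summands grows only polynomially in $t$ (explicitly, $t^{\log_2 3}$ when $t$ is a power of $2$). Computing the characteristic function $\E[(-1)^{M_t}]$ by conditioning layer by layer in reverse time, one picks up at each time $s$ a factor $\prod_{z : a_{t-s-1,z} = 1} (1 - 2 q_{z,s})$ with $q_{z,s} := \P(E_{z,s} = 1 \mid \mathcal{F}_s) \in [0, 2(1-p)]$; for $p$ close to $1$ each such factor lies in $[1 - 4(1-p),\, 1]$ and is in particular nonnegative. Were this a clean product identity, one would conclude that $\E[(-1)^{M_t}]$ stays bounded away from $-1$, giving $\P(M_t = 0) = (1 + \E[(-1)^{M_t}])/2 \ge \epsilon$. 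The subtle point is that these factors are themselves random — they depend, through $\mathbf{a}$ and $\mathbf{y}$, on the as-yet unresolved $F$'s at earlier times — so the naive iteration of conditional expectations produces a mixture of $\pm 1$ signs weighted by nonnegative quantities rather than a clean positive product. Handling this requires either a careful sign-preserving induction exploiting nonnegativity of each accumulated factor, or a reformulation as a second-moment / Fourier argument on the random Boolean function $h_t : \xi \mapsto X_t^\xi(0)$: one would show $\E_{\mathrm{coupling}}[\hat{h}_t(\emptyset)^2] < 1 - \epsilon$ uniformly in $t$ and conclude via Parseval that $h_t$ has non-trivial Fourier mass, hence is non-constant, with positive probability.
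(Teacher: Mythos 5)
Your setup is sound and coincides with the paper's Section \ref{s:flip-preserve}: your $Y_t$ is exactly the flip/preserve process, your bound $\P(E_{x,t}=1\mid\F_t)\le 2(1-p)$ is the paper's \eqref{e:perturb-determ}, and your observation that $\hat Y_t(0)=1$ for $\xi^1=\delta_t$ is the paper's degenerate-case computation of Section \ref{s:degenere}. The reduction to $\liminf_t \P(M_t=0)>0$ is also valid. But at that point the proof stops: the ``heart of the argument'' you name is precisely the theorem, and neither of the two routes you sketch is carried out or, as stated, would work. The layer-by-layer conditioning gives $\E[(-1)^{M_t}]=\E\bigl[\sigma\cdot W\bigr]$ with $W\ge 0$ random and $\sigma\in\{\pm1\}$ random and \emph{correlated} with $W$ (both are functions of the same history), so nonnegativity of each accumulated factor yields only the trivial bound $\E[\sigma W]\ge -\E[W]$; there is no sign to preserve through the induction. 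The Parseval reformulation is an exact restatement of the problem ($\E[\hat h_t(\emptyset)^2]=1-2\,\E_{\xi,\xi'}\P(X_t^{\xi}(0)\ne X_t^{\xi'}(0))$), not a reduction of it. A side error: the per-layer factor $\prod_{z:a_{t-s-1,z}=1}(1-2q_{z,s})$ is not bounded below by $1-4(1-p)$, since a row of Pascal's triangle mod $2$ can have as many odd entries as its index (rows $2^k-1$), so even the ``clean product'' heuristic only gives positivity, not a uniform lower bound.

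The substantive obstacle you have not addressed is that the noise $E_{z,s}$ is not exogenous: it vanishes identically when the local discrepancy window is $(0,0,0)$ (the paper's \eqref{e:zero-zero}), and its conditional law is chosen adversarially by the coupling as a function of the environment $X_s$ and of $Y_s$ itself. This is exactly what lets the coupling try to extinguish discrepancies, and ruling that out occupies Sections \ref{s:mixing}--\ref{s:preuve-theo} of the paper: (i) the environment $X_t$ mixes to Bernoulli$(1/2)^{\otimes\Z}$, so the triple $(a^*,b^*,c^*)$ realizing $\varepsilon$ in \eqref{d:epsilon} occurs with definite density (``nice'' configurations); (ii) a large block of $0$s cannot suddenly appear in $Y$ unless $Y$ previously matched a deterministic antecedent of $\overline{0}$ on a long interval, and those low-period antecedents are themselves destroyed in a nice environment (Lemmas \ref{l:plage-de-zeros}--\ref{l:antecedent-3}); (iii) the rightmost discrepancy dominates a drifting random walk (Lemma \ref{l:danger} and \eqref{l:compare-prob}); and (iv) these estimates feed a comparison with supercritical oriented percolation \`a la Bramson--Neuhauser, which is what finally yields $\liminf_n\P(E_{0,2nT})>0$. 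None of this machinery, or a substitute for it, appears in your argument, so the proposal as written does not prove the theorem.
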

Theorem \ref{t:letheo} shows in particular that coupling from the past (CFTP) based on a basic i.i.d. coupling is not feasible for $p$ close to $1$: indeed, for CFTP based on i.i.d. copies of $F$ to be successful, we would need to have 
$$\lim_{t \to +\infty} \P \left( \exists \ \xi_1, \xi_2 \in \{0,1\}^{\Z} \mbox{ such that } X_t^{\xi_1}(0) \neq X_t^{\xi_2}(0) \right) = 0.$$

\subsection{Existing results}

The PCA we study is a classic example of an ergodic but non-monotonic PCA, see \cite{TooVasStaMitKurPir} (Example 1.3, p. 11, referring back to \cite{Vas}), see also \cite{MarSabTaa} (Section 5.1). Here, our goal is not to study the dynamics of the PCA itself, but rather the dynamics of discrepancies between two versions of the PCA, starting from distinct initial configurations, and using an otherwise arbitrary basic i.i.d. coupling. The stochastic process tracking these discrepancies -- called in the sequel the flip/preserve process -- can be described as another PCA within a random environment, the random environment itself evolving as the original PCA. The resulting flip/preserve process turns out to be related (although not identical) to the PCA discussed in \cite{DomKin,Kin} and in \cite{Dur2} (p. 115), with a full mathematical treatment given in \cite{BraNeu} ("rule 90"). Our approach for proving Theorem \ref{t:letheo} consists in adapting the arguments developed in \cite{BraNeu}, exploiting the mixing properties of the random environment.

\subsection{Organization of the paper}

In Section \ref{s:flip-preserve}, we define the flip/preserve process, and establish some of its basic properties. In Section \ref{s:param-list}, we give the definition of the various parameters involved in the ensuing sections.
In Section \ref{s:mixing}, we state (and prove) certain mixing properties of the random environment, then show how these enable us to ensure that the environment is "nice" with high probability. In Section \ref{s:dynamics-flip-preserve}, the two key ingredients used in \cite{BraNeu} are adapted to the setting of the present flip/preserve process: controlling the occurrence of large intervals of $0$, and the dynamics of the rightmost $1$. Finally, in Section \ref{s:preuve-theo}, we put together the results of the previous sections in order to prove Theorem \ref{t:letheo}, using comparison with oriented percolation in a way similar to \cite{BraNeu}.

\subsection{Notation for intervals}

Given $u,v \in \Z$ such that $u \leq v$, we let $\llbracket u,v \rrbracket = \{ z \in \Z ; \ u \leq z \leq v \}$. To avoid confusion, we distinguish the {\it size} (= number of elements) of $\llbracket u,v \rrbracket$, which is equal to $v-u+1$, and its {\it length}, which is equal to $v-u$.

\section{The flip/preserve process}\label{s:flip-preserve}

\subsection{Definition and first properties}

We define the flip/preserve process $(Y_t)_{t \in \N}$, starting from $\zeta \in \{0,1\}^{\Z}$, by letting
 \begin{equation}\label{e:flip-preserve}Y^{\xi, \zeta}_t(x) = \left( X_t^{\xi+\zeta}(x)  - X_t^{\xi}(x) \right) \mbox{ mod 2}.\end{equation}
 The flip/preserve process tracks the discrepancies between $X_t^{\xi+\zeta}$ and $X_t^{\xi}$. In fact, \eqref{e:flip-preserve} may be rewritten as $Y^{\xi,\zeta}(x) = \un\left(X_t^{\xi+\zeta}(x)  \neq X_t^{\xi}(x) \right)$.
 Note that the value of $X_t^{\xi+\zeta}$ can be recovered from the values of $X_t^{\xi}$ and $Y^{\xi,\zeta}_t$, through the identity \begin{equation}\label{e:ricovert}X_t^{\xi+\zeta}(x)  = \left(   X_t^{\xi}(x) + Y^{\xi,\zeta}_t(x)  \right) \mbox{ mod 2}.\end{equation}

To alleviate notations a bit, let us introduce the random map\footnote{Measurability is not an issue here, since we limit ourselves to maps $\psi \ : \ \{0,1 \}^{\Z} \to \{0,1 \}^{\Z}$ such that $\psi[s](x)$ is a function of $s(x-1),s(x),s(x+1)$, so that $\psi$ can be viewed as a countable collection of elements of the (finite) set of maps from $\{0,1\}^3$ to $\{0,1\}$.} $F_t \ : \ \{0,1 \}^{\Z} \to \{0,1 \}^{\Z}$ defined by   
\begin{equation}\label{e:flot-concis} F_t[\eta](x) =F_{x,t}(\eta(x-1), \eta(x), \eta(x+1)),\end{equation}
where $\eta \in  \{0,1 \}^{\Z}$, $x \in \Z$, and where the random maps $F_{x,t}$ are the local update functions appearing in \eqref{e:flot}.   
By \eqref{e:flot-concis}, we have that $X^{\xi}_{t+1}=F_t[X^{\xi}_t]$ and $X^{\xi+\zeta}_{t+1}=F_t[X^{\xi+\zeta}_t]$, so that, using \eqref{e:ricovert} and \eqref{e:flot-concis}, we can write  
\begin{equation}\label{e:recure-structure}\begin{pmatrix} X^{\xi}_{t+1} \\ Y^{\xi,\zeta}_{t+1} \end{pmatrix} = \begin{pmatrix} F_t[X^{\xi}_t] \\ \left(F_t [ X^{\xi}_t+Y^{\xi,\zeta}_t  \mbox{ mod 2}] - F_t[X^{\xi}_t] \right) \mbox{ mod 2}.\end{pmatrix}\end{equation}
From \eqref{e:recure-structure} and the fact that the sequence $(F_t)_{t \in \N}$ is i.i.d., it is apparent that the sequence $(X_t^{\xi},Y_t^{\xi,\zeta})_{t \in \N}$ has a time-homogeneous Markov chain structure on $\{0,1\}^{\Z} \times \{0,1\}^{\Z}$.  Since we are dealing with Markov processes, we use the classical notations $\P_{\xi}$ and $\P_{\xi,\zeta}$ to refer to the distributions of $(X_t^{\xi})_{t}$ and $(Y^{\xi,\zeta}_t)_t$,  and simply write 
$\P_{\xi}((X_t)_t \in \cdot)$ instead of $\P \left( (X_t^{\xi})_t \in \cdot \right)$, and similarly $\P_{\xi,\zeta}((X_t, Y_t)_t \in \cdot)$ instead of $\P \left((X_t^{\xi}, Y_t^{\xi,\zeta})_t \in \cdot \right)$.

To give a more explicit description of the corresponding Markov structure, we now define the transition kernel $q : \{0,1\}^3 \times \{0,1\}^3 \to [0,1]$ by 
$$q[(a,b,c),(u,v,w)]=\P\left[ F(a+u,b+v,c+w) = F(a,b,c) \right],$$
where $F$ is a local update function as in \eqref{e:update-fonc}. From \eqref{e:recure-structure}, one checks that, with respect to the conditional probability $\P_{\xi,\zeta}(\cdot | X_t,Y_t)$, we have that\footnote{Strictly speaking, these properties are true a.s. and we have to consider a regular version of the conditional probability, whose existence here is completely standard.}: 
\begin{itemize}
\item[\textbullet] $Y_{t+1}(x) = \left\lbrace \begin{array}{ll}0 & \mbox{ with probability }q[\mbox{\small $X_{t}( x-1,x,x+1), Y_{t}(x-1,x,x+1)$}]\\ 1 & \mbox{ with probability }1- q[\mbox{\small $X_{t}(x-1,x,x+1), Y_{t}(x-1,x,x+1)$}] \end{array}\right.,$
\item[\textbullet] $(Y_{t+1}(x))_{x \in \Z}$ is independent.
\end{itemize}

In this sense, we may think of $(Y_t)_{t \in \N}$ as a probabilistic cellular automaton on $\{0,1\}^{\Z}$ evolving within a random environment provided by the dynamics of $(X_t)_{t \in \N}$. 


 We now list two important properties of the kernel $q$. The first one is an immediate consequence of the definition of $q$.
  \begin{equation}\label{e:zero-zero}
 \forall (a,b,c) \in \{0,1\}^3 ,  \  q[(a,b,c),(0,0,0)]=1.
 \end{equation}
 The second one\footnote{Proof: Let $A$ and $B$ be Bernoulli($r$) random variables.  Then
 $\P(A \neq B)=\P(A=0,B=1)+\P(A=1,B=0)$, so $\P(A \neq B) \leq \P(A=0)+\P(B=0) = 2(1-r)$, and $\P(A \neq B) \leq \P(A=1)+\P(B=1) = 2r$, so that $\P(A \neq B) \leq \min(2r, 2(1-r))$. Now, 
if $f(u,w)=0$, both $F(a+u,b+v,c+w)$ and $F(a,b,c)$ have the same distribution, wich is either Bernoulli$(p)$ or Bernoulli$(1-p)$. Hence we can write $ \left| \vphantom{\sum} q[(a,b,c),(u,v,w)]-(1-f(u,w)) \right| = \left| \vphantom{\sum} \P(A=B)-1\right| = \P(A \neq B)$, with $r=p$ or $r=1-p$, and we are done. If $f(u,w)=1$, either  $F(a+u,b+v,c+w)$ has a Bernoulli$(p)$ distribution and $F(a,b,c)$ has a Bernoulli$(1-p)$ distribution, or the other way round. Hence we can write $ \left| \vphantom{\sum} q[(a,b,c),(u,v,w)]-(1-f(u,w)) \right|=\P(A = 1-B)=\P(A \neq B)$, with $r=p$ or $r=1-p$, and again we are done.} precisely quantifies the fact that, for $p$ close to $1$, the dynamics of $(Y_t)$ is close to being deterministic. 
  \begin{equation}\label{e:perturb-determ}
 \forall (a,b,c),(u,v,w) \in \{0,1\}^3, \  \left| \vphantom{\sum} q[(a,b,c),(u,v,w)]-(1-f(u,w)) \right| \leq 2(1-p),
  \end{equation}
where $f$ is defined in \eqref{e:def-f}.
We now let  
\begin{equation}\label{d:epsilon}\varepsilon=\max_{(a,b,c)} \max(q[(a,b,c),(0,0,1)], q[(a,b,c),(1,0,0)])\end{equation}
 and let $(a^*,b^*,c^*)$ be such that 
 \begin{equation}\label{d:abc-etoile} \max(q[(a^*,b^*,c^*),(0,0,1)], q[(a^*,b^*,c^*),(1,0,0)])=\varepsilon.\end{equation}
Note that, by \eqref{e:perturb-determ}, we have the inequality
\begin{equation}\label{e:epsilon-petit}\varepsilon \leq 2(1-p).\end{equation}
 
When $\varepsilon=0$, we briefly show in the next section that coupling never occurs. Thus, except in the next section, {\bf we always assume that $\varepsilon > 0$}.
In the sequel, $\varepsilon$ is the key "small" parameter on which the subsequent constructions are based. Note that, while $p$ is a parameter of the original PCA, $\varepsilon$ is a parameter of the basic i.i.d. coupling under consideration.

\subsection{The degenerate case $\varepsilon=0$}\label{s:degenere}

Assume that $\varepsilon=0$. Consider any $\xi \in \{0,1\}^{\Z}$, and, given $n \geq 1$, let $\zeta(n)=1$, and $\zeta(x)=0$ for any $x \neq n$.
Due to the fact that $\varepsilon=0$, we have that $q[(a,b,c),(0,0,1)]=0$ for any triple $(a,b,c)$. Combining this property with \eqref{e:zero-zero}, an immediate induction shows that, for all $0 \leq t \leq n$,  
$Y_t^{\xi,\zeta}(n-t)=1$ a.s.    As a consequence, we have that $Y_n^{\xi,\zeta}(0)=1$ a.s., so that $X_n^{\xi}(0) \neq X_n^{\xi+\zeta}(0)$ a.s. We deduce that, for all $n \geq 1$, 
 $\P \left( \exists \ \xi_1, \xi_2 \in \{0,1\}^{\Z} \mbox{ such that } X_n^{\xi_1}(0) \neq X_n^{\xi_2}(0) \right)=1$.

 \subsection{Antecedents for the deterministic dynamics}

 Define a map $\phi \ : \ \{0,1\}^{\Z} \to \{0,1\}^{\Z}$ by letting, for every $\zeta \in \{0,1\}^{\Z}$: 
 \begin{equation}\label{e:deterministic-CA}\phi[\zeta](x)=f(\zeta(x-1),\zeta(x+1)).\end{equation}
 The map $\phi$ defines a deterministic cellular automaton, which coincides with the dynamics of $(X_t)$ and of $(Y_t)$ when $p=1$.

 We say that a sequence $\zeta' \in \{0,1\}^{\Z}$ is a deterministic antecedent of $\zeta$ when $\phi[\zeta']=\zeta$. We denote by $\phi^{-1}[\zeta]$ the set of deterministic antecedents of $\zeta$.

The possible deterministic antecedents of the null sequence, and other, related ones, are listed in Table \ref{tab:antecedents} (where our notations for sequences are explained in detail). The proofs stem from the definition of the deterministic dynamics and explicit enumeration of possible cases.
\renewcommand{\arraystretch}{1.5}
 \begin{table}
 \centering
 \begin{tabular}{|c|c|}
 \hline
 sequence & deterministic antecedents \\ \hline
 $\overline{\d{0}}$ & $\overline{\d{0}}$, $\overline{\d{1}}$, $\overline{\d{0}\d{1}}$ \\  \hline
  $\overline{\d{1}}$ & $\overline{\d{1}\d{1}\d{0}\d{0}}$ \\  \hline
  $\overline{\d{0}\mbox{1}}$ & $\overline{\d{1}\mbox{0}\d{0}\mbox{0}}$, $\overline{\d{0}\mbox{1}\d{1}\mbox{1}}$ \\  \hline
  $\overline{\d{0}\mbox{111}}$ & $\overline{0\d{0}\mbox{011}\d{0}\mbox{11}}$, $\overline{\mbox{1}\d{1}\mbox{100}\d{1}\mbox{00}}$ \\
  \hline
  \end{tabular}
  \vspace{1em}
 \caption{Given a finite $\{0,1\}-$valued sequence $a_1, \ldots , a_{\ell},\ldots, a_m$  we use the notation $\overline{a_1 \cdots \d{$a_{\ell}$} \cdots a_{m}}$ for the periodic sequence  $\xi \in \{0,1\}^{\Z}$ defined by
 $\xi(x)=a_{(x+\ell) \mbox{ mod }m}$. When several underdots are used, the notation stands for the set of sequences obtained for the various underdot positions, e.g. $\overline{\d{$a$}b\d{$c$}}= \{  \overline{\d{$a$}bc}, \overline{ab\d{$c$}} \}$. We also write $\overline{\d{$a$}}$ instead of the clumsier $\{ \overline{\d{$a$}} \}$ when it is clear from the context that we are dealing with sets of sequences.} 
 \label{tab:antecedents}
 \end{table}
  Denote by $\mathcal{P}= \overline{\d{0}} \cup \overline{\d{1}} \cup   \overline{\d{0}\mbox{1}} \cup \overline{\d{0}\mbox{1}\mbox{1}\mbox{1}}$ the set of sequences for which the deterministic antecedents are shown in Table \ref{tab:antecedents}, and denote by $\mathcal{P'}=  \overline{\d{0}}  \cup \overline{\d{1}}  \cup \overline{\d{0}\d{1}}  \cup\overline{\d{0}\d{1}\d{1}\d{1}}$ the set formed by the various translations of sequences in $\mathcal{P}$.
 Note that, since $\phi$ commutes with space translations, the antecedents of sequences in $\mathcal{P'}$ 
 are translations of the antecedents shown in the table. Given a finite (non-empty) interval $\llbracket u,v\rrbracket$, and a sequence $\zeta \in \mathcal{P'}$, a sequence $\sigma \in \{0,1\}^{\Z}$ is such that $\phi[\sigma]$ coincides with $\zeta$ on $\llbracket u,v \rrbracket$ if and only if $\sigma$ coincides on $\llbracket u-1,v+1 \rrbracket$ with one of the four sequences in $\phi^{-1}[\zeta]$.

 \section{List of parameters}\label{s:param-list}

 The construction leading to the proof of Theorem \ref{t:letheo} involves the choice of a certain number of parameters, which are listed below. In the sequel, for the sake of clarity, we try as much as possible to obtain general estimates involving these parameters, and only at the very end consider their specific behaviour when $p \to 1$. 
  
 \begin{itemize}
 \item[\textbullet] $K = 10 \pentsup{\varepsilon^{-3/2}}$ 
 \item[\textbullet] $T=4K$
 \item[\textbullet] $\ell_0=3K/10$
 \item[\textbullet] $\ell_n= \pentsup{\ell_0 \cdot\log(1/\varepsilon)^{-2n}}$, where $n \geq 1$ (we only use $n=1,2,3$)
 \item[\textbullet] $\lambda=\pentsup{\varepsilon^{-3/4}}$
 \item[\textbullet] $\theta=e^{-\varepsilon^{-1/3}}$
 \item[\textbullet] $I=\llbracket -K, +K \rrbracket$, 
 \item[\textbullet] $J_{+}= \llbracket +6K/5, +9K/5 \rrbracket$
  \item[\textbullet] $J^1_{+}= \llbracket +6K/5, +7K/5 \rrbracket$
 \item[\textbullet] $J^2_{+}= \llbracket +7K/5, +8K/5 \rrbracket$
 \item[\textbullet] $J^3_{+}= \llbracket +8K/5, +9K/5 \rrbracket$
 \item[\textbullet] $J_{-}=\llbracket -9K/5, -6K/5 \rrbracket$
 \item[\textbullet] $J=J_- \cup J_+$
  \end{itemize}

 Note that the parameters above do not involve the initial conditions of the PCA, and depend on the coupling (i.e. on the distribution of the update function $F$) only through the value of $\varepsilon$. Thanks to \eqref{e:epsilon-petit}, any combination of these parameters going to $+\infty$ (or $0$) as $\epsilon \to 0$ will do so as $p \to 1$, uniformly with respect to the initial conditions and to $F$.   
  
To ensure that the sequence $(\ell_n)_{n \geq 0}$ is non-increasing, one may assume from now on that $ p \geq 1-1/(2e)$.

\section{Mixing properties of the PCA $(X_t)_{t \geq 0}$}\label{s:mixing}

In this section, we give a short proof of the fact that the limiting distribution of the PCA $(X_t)_{t \geq 0}$ is the product Bernoulli measure $\mu=\mbox{Bernoulli}(1/2)^{\otimes \Z}$ on $\{0,1\}^{\Z}$, with an explicit control upon the convergence speed. We then deduce several estimates on the probability that $X_t$ is a "nice" sequence, in a specific sense (see Section \ref{ss:niceties}).

\subsection{Convergence speed}

For $A \subset \Z$, we denote by $\pi_A$ the canonical projection from $\{0,1\}^{\Z}$ to $\{0,1\}^{A}$.

\begin{prop}\label{p:melange-ACP}
For any initial condition $X_0=\xi \in \{0,1\}^{\Z}$, one has the following bound, valid for every non-empty subset $A \subset \Z$ and for all $t \geq 1$:
$$d_{\mbox{\normalfont \scriptsize TV}}\left( \mbox{\normalfont Law}(\pi_A(X_t)),\pi_A(\mu) \right) \leq |A| \cdot d_t, \mbox{ where }d_t=\frac{1}{2}|2p-1|^t.$$
\end{prop}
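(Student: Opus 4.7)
The plan is to exploit the $\mathbb{F}_2$-linearity of the PCA update
$X_{t+1}(x)=X_t(x-1)+X_t(x+1)+\eta_{x,t}\pmod 2$,
where $(\eta_{x,t})$ are iid Bernoulli$(1-p)$ variables independent of $X_0=\xi$. First I would iterate this identity to get the explicit solution formula
\begin{equation*}
X_t(x)=\phi^t[\xi](x)+\sum_{s=0}^{t-1}\sum_{y\in\Z} c_{t-1-s}(x-y)\,\eta_{y,s}\pmod 2,
\end{equation*}
where $c_r\in\{0,1\}$ is the Pascal-mod-$2$ convolution kernel of the linear automaton $\phi$, i.e.\ $\phi^r[\zeta](x)=\sum_y c_r(x-y)\zeta(y)\pmod 2$, so that $c_r(z)=\binom{r}{(r-z)/2}\bmod 2$ when $|z|\le r$ and $z\equiv r\pmod 2$, and zero otherwise. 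This shows that $X_t(x)-\phi^t[\xi](x)$ is an $\mathbb{F}_2$-linear combination of independent Bernoulli$(1-p)$ variables.

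Next, for any non-empty $S\subseteq A$, independence of the $\eta$'s yields the character identity
\begin{equation*}
\E\!\left[(-1)^{\sum_{x\in S}X_t(x)}\right]=(-1)^{\sum_{x\in S}\phi^t[\xi](x)}\,(2p-1)^{N_S},
\end{equation*}
where $N_S$ is the number of pairs $(y,s)$ such that $\sum_{x\in S}c_{t-1-s}(x-y)\equiv 1\pmod 2$. The key combinatorial input is a \emph{corner lemma} giving $N_S\ge t$: for each time slice $s\in\{0,\dots,t-1\}$, put $r=t-1-s$ and $y_*=\max(S)+r$; then $c_r(x-y_*)$ vanishes for all $x\in S$ with $x<\max(S)$ (since then $x-y_*<-r$) and equals $c_r(-r)=\binom{r}{r}=1$ for $x=\max(S)$, producing one odd-multiplicity noise variable per time slice. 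Specialising to $|S|=1$ gives the per-cell bound $d_{TV}(\mathrm{Law}(X_t(x)),\mathrm{Bernoulli}(1/2))\le \tfrac{1}{2}|2p-1|^t=d_t$.

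Finally, to pass from the per-cell bound to the claimed $|A|\cdot d_t$ joint bound, I would build a coupling of $\pi_A(X_t)$ with an iid family $(U_x)_{x\in A}\sim\pi_A(\mu)$ such that $\P(X_t(x)\neq U_x)\le d_t$ for every $x\in A$; a union bound then yields
\begin{equation*}
d_{TV}\bigl(\mathrm{Law}(\pi_A(X_t)),\pi_A(\mu)\bigr)\le \P(\exists\,x\in A:\ X_t(x)\neq U_x)\le |A|\cdot d_t.
\end{equation*}
The hard part will be ensuring the global consistency of this coupling, since per-cell maximal couplings built independently would achieve the per-cell disagreement probability $\le d_t$ but would not in general produce an iid reference family (the $X_t(x)$'s being correlated). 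I would attempt this by exploiting the fact that the terminal-step noise variables $(\eta_{x,t-1})_{x\in A}$ are independent and enter $X_t(x)$ with coefficient $1$ only at site $x$, so they can be replaced independently across $A$ to supply the iid target at per-cell cost $d_t$.
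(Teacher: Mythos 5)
Your linearization, the explicit Pascal-mod-2 solution formula, and the corner lemma are all correct, and they reproduce the paper's key structural observation: the rightmost point $y_*=\max(S)+r$ of each light cone is exactly the paper's diagonal $\{(x_n+k-1,\,t-k+1)\}_{k=1,\dots,t}$ of noise variables entering with coefficient $1$, and your character computation for $|S|=1$ is equivalent to the paper's Lemma on sums of Bernoulli variables mod $2$. The gap is in the final step, the passage from the per-cell bound to the joint bound $|A|\cdot d_t$, which is precisely the point you flag as ``the hard part''. The concrete mechanism you propose --- resampling only the terminal-step noise $(\eta_{x,t-1})_{x\in A}$ --- does produce an i.i.d.\ Bernoulli$(1/2)$ reference family (adding independent fair coins at the last step decorrelates everything), but the maximal coupling of a Bernoulli$(1-p)$ variable with a fair coin disagrees with probability $|p-\tfrac12|=\tfrac12|2p-1|=d_1$, not $d_t$. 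That coupling therefore only proves $d_{\mathrm{TV}}\le |A|\cdot d_1$, a bound that does not decay in $t$ and is useless for the mixing estimates the paper needs. To obtain the rate $d_t$ you must resample, for each $x\in A$, the mod-$2$ sum of an entire diagonal of $t$ noise variables; but then the obstruction you yourself identify reappears in earnest: the diagonal attached to $x_n$ is independent of the residual term $U_{n-1}$ at site $x_n$, yet it is \emph{not} independent of the residual terms $U_{m-1}$ at sites $x_m>x_n$ (for $k\gtrsim (x_m-x_n)/2$ the point $(x_n+k-1,t-k+1)$ lies inside the light cone of $x_m$), so the globally resampled field is no longer obviously i.i.d., nor independent of the residuals, and the union bound does not go through as stated.

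The paper sidesteps the construction of a single global coupling: ordering $x_1<\cdots<x_N$, it shows that the diagonal sum at $x_n$ is independent of the whole vector $\left(X'_t(x_1),\dots,X'_t(x_{n-1}),U_{n-1}\right)$ --- this uses only that the $x_n$-diagonal lies strictly to the right of every light cone based at $x_1,\dots,x_{n-1}$ and of the residual cone at $x_n$ --- deduces that the \emph{conditional} law of $X'_t(x_n)$ given $X'_t(x_1),\dots,X'_t(x_{n-1})$ is within $d_t$ of Bernoulli$(1/2)$, and concludes by telescoping total variation over the coordinates. If you wish to keep a coupling formulation you would need to build the reference family sequentially in that same left-to-right order, which amounts to the same argument. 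As written, your proof establishes the per-cell bound but not the stated proposition.
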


\begin{lemma}\label{l:melange-bernou}
Let $W_1,\ldots,W_t$ be i.i.d.  Bernoulli$(1-p)$ random variables.
Then the total variation distance between the distribution of $W_1+\cdots+W_t \mbox{ mod }2$ and the Bernoulli$(1/2)$ distribution is $\frac{1}{2}|2p-1|^t$.
\end{lemma}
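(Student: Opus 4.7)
My plan is to use a Fourier/character computation on $\mathbb{Z}/2\mathbb{Z}$, which gives the exact distribution of $S_t := W_1+\cdots+W_t \bmod 2$ in one line, with the total variation distance then falling out immediately.

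First, I would compute the single nontrivial character of $\mathbb{Z}/2\mathbb{Z}$ applied to $S_t$, namely $\mathbb{E}\bigl[(-1)^{S_t}\bigr]$. Since $(-1)^{a+b \bmod 2}=(-1)^{a+b}=(-1)^a(-1)^b$, the independence of the $W_i$ lets me factorize
\[
\mathbb{E}\bigl[(-1)^{S_t}\bigr] \;=\; \prod_{i=1}^t \mathbb{E}\bigl[(-1)^{W_i}\bigr] \;=\; \bigl(p\cdot 1 + (1-p)\cdot(-1)\bigr)^t \;=\; (2p-1)^t.
\]
Combined with $\mathbb{P}(S_t=0)+\mathbb{P}(S_t=1)=1$ and $\mathbb{P}(S_t=0)-\mathbb{P}(S_t=1) = \mathbb{E}[(-1)^{S_t}]$, this yields the explicit formulas $\mathbb{P}(S_t=0) = \tfrac{1+(2p-1)^t}{2}$ and $\mathbb{P}(S_t=1) = \tfrac{1-(2p-1)^t}{2}$.

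Then I would just unfold the definition of total variation distance on the two-point space $\{0,1\}$: denoting by $\nu_t$ the law of $S_t$ and by $\beta$ the Bernoulli$(1/2)$ law,
\[
d_{\mathrm{TV}}(\nu_t,\beta) \;=\; \tfrac{1}{2}\bigl(|\nu_t(0)-\tfrac{1}{2}| + |\nu_t(1)-\tfrac{1}{2}|\bigr) \;=\; \tfrac{1}{2}\cdot 2\cdot \tfrac{|2p-1|^t}{2} \;=\; \tfrac{1}{2}|2p-1|^t,
\]
which is the claimed identity.

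There is no real obstacle here; the only thing to be slightly careful about is the passage from $(-1)^{a+b \bmod 2}$ to $(-1)^{a+b}$, which is immediate since $(-1)^2=1$, and the normalization convention in the definition of $d_{\mathrm{TV}}$ (the factor $\tfrac{1}{2}$). As a sanity check, the result correctly gives $d_{\mathrm{TV}}=0$ when $p=1/2$ (then $W_1$ is already Bernoulli$(1/2)$) and $d_{\mathrm{TV}}=\tfrac{1}{2}$ when $p\in\{0,1\}$ (then $S_t$ is deterministic).
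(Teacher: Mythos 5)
Your proof is correct and is essentially the same as the paper's: both reduce the claim to the identity $\P(S_t=0)-\P(S_t=1)=(2p-1)^t$, which the paper obtains by expanding the binomial sum $\sum_k \binom{t}{k}(-(1-p))^k p^{t-k}=(p-(1-p))^t$ and you obtain by factorizing $\E[(-1)^{S_t}]=\prod_i\E[(-1)^{W_i}]$ -- the same computation in different notation. Your version has the minor merit of spelling out the final total variation step, which the paper leaves implicit.
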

\begin{proof}[Proof of Lemma \ref{l:melange-bernou}]
Let $V=W_1+\ldots+W_t$, so that $V$ follows the binomial $(t,(1-p))$ distribution. As a consequence, $\P(V \mbox{ mod }2 =1)=\sum_{k \in \llbracket 0,t\rrbracket \atop \mbox{\scriptsize odd $k$} } {t \choose k} (1-p)^k p^{t-k}$ and
 $\P(V \mbox{ mod }2 =0)=\sum_{k \in \llbracket 0,t\rrbracket \atop \mbox{\scriptsize even $k$} } {t \choose k} (1-p)^k p^{t-k}$, so that 
 $\P(V \mbox{ mod }2 =0)-\P(V \mbox{ mod }2 =1) = (-(1-p)+p)^t$. Since $\P(V \mbox{ mod }2 =0)+\P(V \mbox{ mod }2 =1)=1$, we get the values
 $\P(V \mbox{ mod }2 =0)=1/2+(1/2) (-(1-p)+p)^t$ and $\P(V \mbox{ mod }2 =1)=1/2-(1/2) (-(1-p)+p)^t$.
\end{proof}

\begin{proof}[Proof of Proposition \ref{p:melange-ACP}]
For the proof, we introduce a version of $X_t$, denoted by $X'_t$, which does not rely on \eqref{e:flot}. Specifically, given an i.i.d. family of Bernoulli$(1-p)$ random variables $(W_{x,t})_{x \in \Z, t \geq 1}$, we start from the initial configuration $X'_0 =\xi$, and set, for all $t \geq 0$: 
\begin{equation}\label{e:iter-coupl-add}X'_{t+1}(x)= \left( X'_{t}(x-1)+X'_t(x+1)+W_{x,t+1} \right) \mbox{ mod }2.\end{equation}
For a given initial condition $X'_0=X_0=\xi$, and any fixed $t$, the distributions of $X_t$ and $X'_t$ coincide, so that it is enough to prove the conclusion of Proposition \ref{p:melange-ACP} for $X'_t$ instead of $X_t$.

Consider $t \geq 1$ and a subset $A \subset \Z$ with $N \geq 1$ elements, and write $x_1 < \cdots < x_N$ the elements of $A$. Iterating \eqref{e:iter-coupl-add}, we see that, for $1 \leq n \leq N$, $X'_{t}(x_n)$ can be written as a sum 
$$X'_{t}(x_n)= \left( \sum_{x=x_n-t}^{x_n+t} \gamma^{x_n}_{x,t} \cdot X'_0(x) + \sum_{k=1}^t \sum_{x=x_n-k+1}^{x_n+k-1} \delta^{x_n}_{x,k} \cdot W_{x,t-k+1} \right)  \mbox{ mod }2,$$
where $\gamma^{x_n}_{x,t}$ and $\delta^{x_n}_{x,k}$ are (non-random) integer numbers, and with $\delta^{x_n}_{x_n+k-1,k}=1$ for $k=1,\ldots, t$.
As a consequence, we can rewrite $X'_t(x_n)$ as 
\begin{equation}\label{e:regroupement}X'_{t}(x_n)=U_{n-1} \mbox{ mod }2 + \left(  \sum_{k=1}^t W_{x_n+k-1,t-k+1} \right) \mbox{ mod }2,\end{equation}
 where  $U_{n-1}$ is a (measurable) function of $X'_0(x)$ for $x_n-t \leq x \leq x_n+t$, and of $W_{x,t-k+1}$ for $1 \leq k \leq t$ and $x_n-k+1 \leq x < x_n+k-1$.

Starting with the case $n=1$, we observe that $S=\sum_{k=1}^t W_{x_1+k-1,t-k+1}$ is independent from $U_0$, so that, in view of \eqref{e:regroupement}, the distribution of $X'_t(x_1)=U_0+S \mbox{ mod 2}$ is a mixture of the distribution $\nu_1$ of  $S \mbox{ mod 2}$, and of the distribution $\nu_2$ of $1+S \mbox{ mod 2}$. Letting $\nu$ denote the Bernoulli$(1/2)$ distribution, we deduce from Lemma \ref{l:melange-bernou} that  $d_{\mbox{\normalfont \scriptsize TV}}(\nu_1, \nu) \leq  \frac{1}{2}|2p-1|^t$, and similarly that  $d_{\mbox{\normalfont \scriptsize TV}}(\nu_2, \nu) \leq  \frac{1}{2}|2p-1|^t$. Using the bound $d_{\mbox{\normalfont \scriptsize TV}}(\alpha \nu_1 + (1-\alpha) \nu_2, \nu) \leq \alpha d_{\mbox{\normalfont \scriptsize TV}}(\nu_1, \nu)+ (1-\alpha) d_{\mbox{\normalfont \scriptsize TV}}(\nu_2, \nu)$, we deduce that the total variation distance between the distribution of $X_t(x_1)$ and the 
Bernoulli$(1/2)$ distribution is $\leq  \frac{1}{2}|2p-1|^t$.

Then, using  $\eqref{e:regroupement}$ and the fact that $x_1<\cdots<x_N$ , we deduce that, for $n=2,\ldots, N$, 
$\sum_{k=1}^t W_{x_n+k-1,t-k+1}$ is independent from $X'_t(x_1),\ldots, X'_{t}(x_{n-1})$, $U_{n-1}$. Using Lemma \ref{l:melange-bernou} again, we deduce that the total variation between the conditional distribution of $X'_t(x_n)$ given $X'_t(x_1),\ldots, X'_{t}(x_{n-1})$, and the Bernoulli$(1/2)$ distribution, is $\leq \frac{1}{2}|2p-1|^t$.

We conclude\footnote{Here we repeatedly use the elementary result that, given two pairs of random variables $(\Gamma_1,\Gamma_2)$ and $(\Delta_1,\Delta_2)$ taking values in a finite set, $d_{\mbox{\normalfont \scriptsize TV}}( \mbox{Law}(\Gamma_2),  \mbox{Law}(\Delta_2)   ) \leq d_{\mbox{\normalfont \scriptsize TV}}( \mbox{Law}(\Gamma_1),  \mbox{Law}(\Delta_1)   ) + \max_{u}d_{\mbox{\normalfont \scriptsize TV}}( \mbox{Law}(\Gamma_2 | \Gamma_1=u),  \mbox{Law}(\Delta_2|\Delta_1=u)   ) $, with the convention that $d_{\mbox{\normalfont \scriptsize TV}}( \mbox{Law}(\Gamma_2 | \Gamma_1=u),  \mbox{Law}(\Delta_2|\Delta_1=u))=0$ if any of the events $\{\Gamma_1=u\}$ or $\{\Delta_1=u\}$ has probability zero.} that the total variation between the (joint) distribution of $\left(X'_t(x_1),\ldots, X'_{t}(x_{N})\right)$ and the $\mbox{Bernoulli}(1/2)^{\otimes N}$ distribution, is bounded above by $N \cdot \frac{1}{2}|2p-1|^t$.

\end{proof}

\subsection{Nice sequences}\label{ss:niceties}

\begin{madef}
 Given an interval $\llbracket g,h\rrbracket \subset \Z$ of size $\geq 10$, we say that a $\{0,1\}-$valued sequence $(s_x)_{x \in \llbracket g,h\rrbracket}$ is nice if, for every $i \in \{1,\ldots, 8\}$,
\begin{equation}\label{e:jojo}\frac{\left| \{x \in  \llbracket g, h-2 \rrbracket \cap (i+8 \Z) \mbox{ such that }(s_{x} ,s_{x+1}, s_{x+2} )= (a^*, b^*, c^*) \} \right|}{ \left| \llbracket g, h-2 \rrbracket \cap (i+8 \Z) \right|} \geq 1/9.\end{equation}
\end{madef}
\begin{madef}
Given a finite subset $A \subset \Z$, and $10 \leq \ell \leq |A|$, we say that a sequence $s \in \{0,1\}^A$  is $\ell-$nice if $s(\llbracket g,h\rrbracket)$ is nice for every interval $\llbracket g,h\rrbracket \subset A$ of size $\geq \ell$.
\end{madef}

For the set $J$ defined in Section \ref{s:param-list}, we have the following result.
\begin{lemma}\label{l:nice}
If $X_0 \sim \mu$, and $10 \leq \ell \leq |J|$, the probability that $X_0(J)$ is not $\ell-$nice is bounded above by \begin{equation}\label{e:eK}e_{K,\ell}=c |J|^2 e^{-\rho \ell},\end{equation}
where $\rho$ and $c$ are positive constants. 
\end{lemma}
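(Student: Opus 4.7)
The idea is to combine an elementary Chernoff/Hoeffding estimate on each fixed subinterval (for each shift class mod 8) with a union bound over all choices of subinterval and shift.

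The first step is to exploit the independence structure. For any fixed $i \in \{1, \ldots, 8\}$, I would observe that the triples $(X_0(x), X_0(x+1), X_0(x+2))$ indexed by $x \in i + 8\Z$ form a family of independent random variables: if $x < x'$ both lie in $i + 8\Z$, then $x' \geq x + 8$, so $\{x,x+1,x+2\}$ and $\{x',x'+1,x'+2\}$ are disjoint, and the independence follows from the product structure of $\mu$. Each such triple equals $(a^*, b^*, c^*)$ with probability exactly $2^{-3} = 1/8$.

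The second step is a Chernoff estimate on a fixed subinterval. Fix a subinterval $\llbracket g,h \rrbracket \subset J$ with $|\llbracket g,h\rrbracket| \geq \ell$ and an index $i \in \{1, \ldots, 8\}$, and set $N_{g,h,i} = |\llbracket g, h-2 \rrbracket \cap (i + 8\Z)|$. A direct count yields $N_{g,h,i} \geq (\ell - 10)/8$, so that $N_{g,h,i} \geq c_1 \ell$ for some $c_1 > 0$ (using $\ell \geq 10$). By the previous step, the number of triples equal to $(a^*, b^*, c^*)$ is a Binomial$(N_{g,h,i}, 1/8)$ random variable, and its mean fraction of successes is $1/8 > 1/9$. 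Applying Hoeffding's inequality with threshold $1/9 = 1/8 - 1/72$,
$$\P\bigl(\text{\eqref{e:jojo} fails for this } \llbracket g,h\rrbracket \text{ and } i\bigr) \leq \exp\!\left(-2 N_{g,h,i} (1/72)^2\right) \leq \exp(-\rho \ell),$$
for some $\rho > 0$.

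The third and final step is a union bound. The event that $X_0(J)$ is not $\ell$-nice is the union, over all subintervals $\llbracket g,h\rrbracket \subset J$ of size at least $\ell$ and all $i \in \{1,\ldots,8\}$, of the bad events above. The number of subintervals of $J$ is at most $|J|^2$, and the number of choices of $i$ is $8$. A direct union bound gives
$$\P(X_0(J) \text{ is not } \ell\text{-nice}) \leq 8 |J|^2 e^{-\rho \ell},$$
which is of the claimed form $c |J|^2 e^{-\rho \ell}$.

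There is no real obstacle here; the only thing that requires attention is verifying the disjointness (hence independence) of the triples within a fixed residue class mod 8, and keeping track of constants so that $N_{g,h,i}$ is indeed linear in $\ell$, allowing Hoeffding's bound to collapse into a clean exponential in $\ell$.
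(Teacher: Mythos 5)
Your proof is correct and follows essentially the same route as the paper's: independence of the triples within a fixed residue class mod $8$ under the product measure $\mu$, a standard large-deviations/Hoeffding bound for the proportion of triples equal to $(a^*,b^*,c^*)$ dropping from mean $1/8$ below $1/9$, and a union bound over the at most $|J|^2$ subintervals and $8$ residues. The only (cosmetic) slip is the step ``$N_{g,h,i}\geq(\ell-10)/8$ hence $N_{g,h,i}\geq c_1\ell$,'' which fails at $\ell=10$ as written; use instead $N_{g,h,i}\geq\left\lfloor (m-2)/8\right\rfloor\geq 1$ together with $m\geq\ell\geq 10$ to get $N_{g,h,i}\geq c_1\ell$, exactly as in the paper.
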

\begin{proof}
Given an interval $\llbracket g,h\rrbracket \subset J$ of size $m \geq \ell$, and $i \in \{1,\ldots, 8\}$, we have that $ \llbracket g, h-2 \rrbracket \cap (i+8 \Z)$ contains at least   $\pentinf{\frac{m-2}{8}}$ elements.  
Since any two distinct elements of $ \llbracket g, h-2 \rrbracket \cap (i+8 \Z)$ are separated by at least $8$ units, $(X_0(x),X_0(x+1),X_0(x+2))_{x \in \llbracket g, h-2 \rrbracket \cap (i+8 \Z)}$ forms an i.i.d. family of triples, whose common distribution satisfies
 $\P((X_0(x),X_0(x+1),X_0(x+2))=(a^*, b^*, c^*))=1/8$. The probability that, among a family of at least $\pentinf{\frac{m-2}{8}}$ such i.i.d. triples, the proportion of those equal to 
$(a^*, b^*, c^*)$ falls below $1/9$, is bounded above by $c_1 e^{-\rho m}$, for suitable positive constants $\rho,c_1$, using a standard large deviations bound. Given $m$, there are at most $|J|$ intervals $\llbracket g,h\rrbracket \subset J$ of size $m$. Moreover, one has $m \leq |J|$, so we have to deal with at most $|J|^2$ intervals. Using the fact that $e^{-\rho m} \leq e^{-\rho \ell}$ since $m \geq \ell$, and summing over all possible cases, the probability for $X_0(J)$ not to be $\ell-$nice is bounded above by $8 |J|^2 c_1 e^{-\rho \ell}$.
\end{proof}

Using Proposition \ref{p:melange-ACP}, we deduce that, for any initial configuration $X_0=\xi$ at time $0$, we have that 
$$\P_{\xi}\left(X_t(J) \mbox{ is not $\ell-$nice}\right) \leq  e_{K,\ell} + |J| d_t .$$
By the union bound, and the fact that the sequence $(d_t)_{t \geq 0}$ is non-increasing, we then deduce that 
\begin{equation}\label{e:reunion-lignes}\P_{\xi}\left( \bigcup_{t=K}^{2K-1} \{ X_t(J) \mbox{ is not $\ell-$nice} \}\right) \leq  K (e_{K,\ell} + |J|d_K).\end{equation}
Define $$Q_{\ell}(\xi)=\P_{\xi} \left(  \bigcup_{t=0}^{K-1} \{ X_t(J) \mbox{ is not $\ell-$nice} \}  \right).$$
Note that the definition of $Q_{\ell}$ involves the dynamics of $X_t(J)$ from $t=0$ to $t=K-1$, and also that $J \subset \llbracket -2K, +2K \rrbracket$. Due to the nearest-neighbour 
character of the PCA dynamics, we deduce that
\begin{equation}\label{e:Q-local}\mbox{ $Q_{\ell}(\xi)$ is a function of $\xi(\llbracket -3K,+3K \rrbracket)$ only.}\end{equation}
 By the Markov property, 
we have that 
$$ \P_{\xi}\left( \bigcup_{t=K}^{2K-1} \{ X_t(J) \mbox{ is not $\ell-$nice} \}\right) = \E_{\xi}(Q_{\ell}(X_K)),$$
so \eqref{e:reunion-lignes} implies that
$$\E_{\xi}(Q_{\ell}(X_K)) \leq  K (e_{K,\ell} + |J|d_K).$$
By Markov's inequality, 
\begin{equation}\label{e:merci-Markov}\P_{\xi} \left( Q_{\ell}(X_K) > \theta  \right) \leq  \theta^{-1} K (e_{K,\ell} + |J|d_K).\end{equation}
We now observe that, if $\xi$ is such that $Q_{\ell}(\xi) \leq \theta$, 
\begin{equation}\label{e:controle-nice}\P_{\xi}\left( \bigcup_{t=0}^{4K-1} \{ X_t(J) \mbox{ is not $\ell-$nice}\}  \right) \leq \theta + 3 K (e_{K,\ell} + |J|d_K),\end{equation}
using the definition of $Q_{\ell}$ for $t$ between $0$ and $K-1$, then conditioning by $X_{(i-1)K}$ and using \eqref{e:reunion-lignes} for $t$ between $iK$ and $(i+1)K-1$, with $i=1,2,3$. 
Similarly, using translation invariance, we have that 
\begin{equation}\label{e:controle-nice-2}\P_{\xi}\left( \bigcup_{t=4K-2}^{4K-1} \{X_t(J+2K)\mbox{ is not $\ell-$nice} \}\right) \leq  2(e_{K,\ell} + |J|d_K),\end{equation}
Moreover, conditioning by $X_{3K}$ and using \eqref{e:merci-Markov} and translation invariance, we also have that 
\begin{equation}\label{e:controle-nice-3}\P_{\xi}\left[ \vphantom{\sum} Q_{\ell}\left((X_{4K}(x+2K))_{x \in \Z} \right) > \theta \right] \leq  \theta^{-1} K (e_{K,\ell} + |J|d_K).\end{equation}

\section{Dynamics of the flip/preserve process}\label{s:dynamics-flip-preserve}

In this section, we adapt the arguments developed in \cite{BraNeu} to control the occurrence of large intervals of $0$s, and the dynamics of the rightmost $1$, in the flip/preserve process. The present situation differs from \cite{BraNeu}, since, in our setting, the key parameter $\varepsilon$ is only involved in probabilities of transitions $(0,0,1) \to 0/1$ or $(1,0,0) \to 0/1$, and provides the exact value of the transition probability only when the underlying environment is $(a^*,b^*,c^*)$. As regards the other transition probabilities, beyond the fact that a transition from $(0,0,0)$ always leads to a $0$, the only control we have is the $2(1-p)$ bound \eqref{e:perturb-determ} on the difference with respect to the deterministic dynamics. 

\subsection{Controlling the occurrence of a large interval of $0$s}

Broadly speaking, the idea is that, if a large interval of $0$s appears at time $t$, where at least a $1$ was present at time $t-1$, either the configuration at time $t-1$ coincides with a deterministic antecedent of $\overline{0}$ on a smaller (but still large) interval, or the deterministic dynamics is not followed from time $t-1$ to time $t$ for a large number of sites. Iterating such an argument up to three times leads to deterministic antecedents containing large numbers of $001$ and $100$, for which the definition of $\varepsilon$ allows us to get the control we need.

\begin{lemma}\label{l:plage-de-zeros}
Assume that $\zeta(\llbracket -\ell_0,+\ell_0 \rrbracket) \neq (0,\ldots,0)$, and that there exist $x \in \llbracket \ell_0+2, 2\ell_0+1 \rrbracket$ such that $\zeta(\llbracket x-1,x \rrbracket)=(0,0)$ and $x' \in \llbracket -2\ell_0-1, -\ell_0-2 \rrbracket$ such that $\zeta(\llbracket x',x'+1 \rrbracket)=(0,0)$, where $\ell_0 \geq 1$. Then, for every $\xi$, $$\P_{\xi,\zeta}(Y_1(\llbracket -2\ell_0,+2 \ell_0 \rrbracket)=(0,\ldots,0)) \leq \varepsilon^2.$$
\end{lemma}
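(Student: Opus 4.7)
The plan is to locate, deterministically from $\zeta$, two sites $z_1, z_2 \in \llbracket -2\ell_0, +2\ell_0 \rrbracket$ at which the three-cell $\zeta$-window equals $(1,0,0)$ and $(0,0,1)$, respectively. At such a site, the conditional probability that $Y_1$ equals $0$ is a value of $q$ of the form $q[\cdot,(1,0,0)]$ or $q[\cdot,(0,0,1)]$, which by the very definition \eqref{d:epsilon} of $\varepsilon$ is bounded by $\varepsilon$ \emph{uniformly} in the environment $\xi$. Since cells are updated by independent random functions, $Y_1(z_1)$ and $Y_1(z_2)$ will be independent under $\P_{\xi,\zeta}$, and the event $\{Y_1(\llbracket -2\ell_0, +2\ell_0 \rrbracket) = (0,\ldots,0)\}$ will be contained in $\{Y_1(z_1) = 0\} \cap \{Y_1(z_2) = 0\}$, yielding the desired $\varepsilon^2$ bound.

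To produce the right-hand pattern I would set $y^+_R := \max\{y \leq x-2 : \zeta(y) = 1\}$. The maximum exists because the hypothetical $1$ in $\llbracket -\ell_0, +\ell_0 \rrbracket$ sits at or below $\ell_0 \leq x-2$, and yields $-\ell_0 \leq y^+_R \leq x - 2 \leq 2\ell_0 - 1$. By maximality, $\zeta$ vanishes on $\llbracket y^+_R + 1, x-2 \rrbracket$, and by hypothesis $\zeta(x-1) = \zeta(x) = 0$; together these give $\zeta(y^+_R + 1) = \zeta(y^+_R + 2) = 0$, so that at $z_1 := y^+_R + 1$ the $\zeta$-window reads $(1,0,0)$, and $z_1 \in \llbracket -\ell_0 + 1, +2\ell_0 \rrbracket$. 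The mirror construction with $y^-_L := \min\{y \geq x'+2 : \zeta(y) = 1\}$ delivers $z_2 := y^-_L - 1 \in \llbracket -2\ell_0, \ell_0 - 1 \rrbracket$ with $\zeta$-window $(0,0,1)$. Since any $1$ in $\llbracket -\ell_0, +\ell_0 \rrbracket$ lies in $\llbracket x'+2, x-2 \rrbracket$, one has $y^-_L \leq y^+_R$, hence $z_2 < z_1$, so the two sites are distinct.

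With $z_1$ and $z_2$ in hand, the conclusion is immediate: apply \eqref{d:epsilon} at both sites and use the fact that $Y_1(z_1)$ and $Y_1(z_2)$ depend on the disjoint update functions $F_{z_1, 0}$ and $F_{z_2, 0}$, which are independent. I do not expect any serious obstacle here — in particular, none of the mixing properties of the $X$-dynamics, the perturbation bound \eqref{e:perturb-determ}, or the specific maximiser $(a^*, b^*, c^*)$ are needed for this lemma; the only real hazard is the index bookkeeping verifying that the two arithmetic relations $\ell_0 \leq x-2$ and $-\ell_0 \geq x'+2$ force both $z_1$ and $z_2$ to land inside $\llbracket -2\ell_0, +2\ell_0 \rrbracket$, which is precisely what the hypothesised positions of the flanking $(0,0)$ windows were calibrated to ensure.
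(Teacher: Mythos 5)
Your proposal is correct and follows essentially the same strategy as the paper's proof: locate a $(1,0,0)$ window and a $(0,0,1)$ window inside $\llbracket -2\ell_0,+2\ell_0\rrbracket$ using the $1$ guaranteed in $\llbracket -\ell_0,+\ell_0\rrbracket$ together with the flanking double-zeros, bound each site's probability of updating to $0$ by $\varepsilon$ via \eqref{d:epsilon}, and multiply using independence of the updates at the two distinct sites. The only cosmetic difference is that you locate the patterns via the extremal $1$ adjacent to the given double-zero, whereas the paper locates the extremal double-zero adjacent to the given $1$; the index bookkeeping in both versions checks out.
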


\begin{proof}
Since $\zeta(\llbracket -\ell_0,+\ell_0 \rrbracket) \neq (0,\ldots,0)$, there exists $y \in \llbracket -\ell_0,+\ell_0 \rrbracket$ such that $\zeta(y)=1$. Choose arbitrarily such a $y$, and let $z$ be the minimum integer in $\llbracket y+2,+\infty \llbracket$ such that $\zeta(\llbracket z-1,z \rrbracket)=(0,0)$. By definition, we have that $z \leq x$, and, due to the definition of $z$ as a minimum and to the fact that $\zeta(y)=1$, one must have $\zeta(z-2)=1$, so that $\zeta(\llbracket z-2,z \rrbracket)=(1,0,0)$.  Using \eqref{d:epsilon}, we deduce that $\P_{\xi,\zeta}(Y_1(z-1)=0) \leq \varepsilon$. Defining $z'$ as the maximum integer in $\rrbracket -\infty,  y-2 \rrbracket$ such that $\zeta(\llbracket z',z'+1 \rrbracket)=(0,0)$, we deduce in a similar way that $z' \geq x'$, and that $\zeta(\llbracket z',z'+2 \rrbracket)=(0,0,1)$ so that $\P_{\xi,\zeta}(Y_1(z'+1)=0) \leq \varepsilon$. Since by construction $-2 \ell_0 \leq x'+1 \leq z'+1<z-1 \leq x-1 \leq 2 \ell_0$, we get that $\P_{\xi,\zeta}(Y_1(\llbracket -2\ell_0,+2 \ell_0 \rrbracket)=(0,\ldots,0)) \leq \P_{\xi,\zeta}(Y_1(z-1)=0) \P_{\xi,\zeta}(Y_1(z'+1)=0) \leq \varepsilon^2$.
\end{proof}

\begin{lemma}\label{l:non-determinisme}
Consider $\zeta_0,\zeta_1,\xi \in \{0,1\}^{\Z}$, $u<v$ and $1 \leq m \leq u-v+1=\ell$. If there is no subinterval of $\llbracket u,v\rrbracket$ of size $m$ on which $\phi[\zeta_0]$ and $\zeta_1$ coincide, then 
$$\P_{\xi,\zeta_0} \left(Y_1(\llbracket u,v\rrbracket) = \zeta_1(\llbracket u,v\rrbracket) \right) \leq  \left(2(1-p)\right)^{\pentinf{\ell/m}}.$$
\end{lemma}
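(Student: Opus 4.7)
The plan is to combine the pointwise perturbation bound \eqref{e:perturb-determ} with the conditional independence of the site updates $(Y_1(x))_{x \in \Z}$ at time $1$ given $(X_0,Y_0) = (\xi,\zeta_0)$, which is recorded in the bullet points following \eqref{e:recure-structure}.

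First, I would show that for a single site $x$ with $\phi[\zeta_0](x) \neq \zeta_1(x)$ one has
$$\P_{\xi,\zeta_0}\bigl(Y_1(x) = \zeta_1(x)\bigr) \leq 2(1-p).$$
This is a direct consequence of \eqref{e:perturb-determ} with $(a,b,c) = (\xi(x-1),\xi(x),\xi(x+1))$ and $(u,v,w) = (\zeta_0(x-1),\zeta_0(x),\zeta_0(x+1))$, splitting into two cases according to whether $\zeta_1(x) = 0$ (so $\phi[\zeta_0](x) = f(\zeta_0(x-1),\zeta_0(x+1)) = 1$, hence $q \leq 2(1-p)$), or $\zeta_1(x) = 1$ (so $f(\zeta_0(x-1),\zeta_0(x+1)) = 0$, hence $1-q \leq 2(1-p)$).

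Second, I would locate $N := \lfloor \ell/m \rfloor$ pairwise disjoint discrepancy sites inside $\llbracket u, v \rrbracket$. To do so, partition the leftmost $Nm$ positions of $\llbracket u,v \rrbracket$ into consecutive blocks $I_1, \ldots, I_N$, each of size exactly $m$; each $I_j$ is a subinterval of $\llbracket u,v \rrbracket$ of size $m$, so by hypothesis there exists $x_j \in I_j$ with $\phi[\zeta_0](x_j) \neq \zeta_1(x_j)$. Because the blocks are disjoint, the chosen sites $x_1, \ldots, x_N$ are pairwise distinct.

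Finally, conditional independence of $(Y_1(x))_{x \in \Z}$ under $\P_{\xi,\zeta_0}$ gives
$$\P_{\xi,\zeta_0}\bigl(Y_1(\llbracket u,v \rrbracket) = \zeta_1(\llbracket u,v \rrbracket)\bigr) \leq \prod_{j=1}^{N} \P_{\xi,\zeta_0}\bigl(Y_1(x_j) = \zeta_1(x_j)\bigr) \leq \bigl(2(1-p)\bigr)^{\lfloor \ell/m \rfloor},$$
which is the claimed bound. There is essentially no obstacle here; the only care needed is the case analysis in the first step to read off the correct direction of the perturbation estimate \eqref{e:perturb-determ}, and the observation that disjointness of the blocks $I_j$ is precisely what turns the single-site bound into the multiplicative bound via independence.
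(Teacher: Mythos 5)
Your proof is correct and follows essentially the same route as the paper: the single-site bound $\P_{\xi,\zeta_0}(Y_1(x)=\zeta_1(x)) \leq 2(1-p)$ at discrepancy sites via \eqref{e:perturb-determ}, combined with the factorization of $\P_{\xi,\zeta_0}(Y_1(\llbracket u,v\rrbracket)=\zeta_1(\llbracket u,v\rrbracket))$ into a product over sites and the observation that $\llbracket u,v\rrbracket$ contains $\pentinf{\ell/m}$ disjoint subintervals of size $m$, each forced by hypothesis to contribute at least one small factor. Your explicit case analysis for the single-site step and your selection of one discrepancy site per block are just slightly more detailed renderings of what the paper does.
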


\begin{proof}
Remember that 
\begin{equation}\label{e:produit-proba}\P_{\xi,\zeta_0} \left(Y_1(\llbracket u,v\rrbracket) =  \zeta_1(\llbracket u,v\rrbracket)\right) = \prod_{x \in \llbracket u,v\rrbracket} \P_{\xi,\zeta_0}(Y_1(x) = \zeta_1(x)).\end{equation}
Now consider $x$ such that $\zeta_1(x) \neq \phi[\zeta_0](x)$. By property \eqref{e:perturb-determ}, we have that $\P_{\xi,\zeta_0}(Y_1(x) = \zeta_1(x)) \leq 2 (1-p)$. Since the interval $\llbracket u,v\rrbracket$ contains $\pentinf{\ell/m}$ disjoint consecutive intervals of size $\ell$, every such subinterval on which $\phi[\zeta]$ and $\zeta_1$ do not coincide leads to at least one factor  $\P_{\xi,\zeta_0}(Y_1(x) = \zeta_1(x)) \leq 2 (1-p)$ in the product \eqref{e:produit-proba}, so we get the stated result.
\end{proof}

In the sequel, we use the notation $u_1= \left(2(1-p)\right)^{\pentinf{\ell_{0}/\ell_{1}}}$, and, for $n \geq 2$,  $u_n=  (4 \ell_0+1)  \left(2(1-p)\right)^{\pentinf{\ell_{n-1}/\ell_{n}}}$.

\begin{lemma}\label{l:antecedent-1}
Assume that $\zeta(\llbracket -\ell_0,+\ell_0 \rrbracket) \neq (0,\ldots,0)$, and that there is no subinterval of $\llbracket -2\ell_0,+2 \ell_0 \rrbracket$ of size $\ell_1$ on which $\zeta$ coincides with a sequence in {\normalfont $\overline{\d{1}} \cup \overline{\d{0}\d{1}}$}, where $\ell_1 \geq 1$. Then, for every $\xi$, $$\P_{\xi,\zeta}(Y_1(\llbracket -2\ell_0,+2 \ell_0 \rrbracket)=(0,\ldots,0)) \leq \max\left(\varepsilon^2 ,u_1\right).$$

\end{lemma}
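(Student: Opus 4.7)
The plan is a two-case analysis combining Lemmas~\ref{l:plage-de-zeros} and~\ref{l:non-determinisme}, driven by the behaviour of $\zeta$ on the two flanking sub-intervals $I_R := \llbracket \ell_0+1, 2\ell_0\rrbracket$ and $I_L := \llbracket -2\ell_0, -\ell_0-1\rrbracket$, each of size $\ell_0$. The dichotomy is: does $\phi[\zeta]$ vanish identically on some sub-interval of size $\ell_1$ of \emph{each} of $I_R$ and $I_L$, or not?

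If this fails on at least one flank---say $I_R$---then I would apply Lemma~\ref{l:non-determinisme} with $\zeta_0 = \zeta$, $\zeta_1 \equiv 0$ on $\llbracket u, v\rrbracket = I_R$ (so $\ell = \ell_0$), and $m = \ell_1$, which yields
$$\P_{\xi,\zeta}\bigl(Y_1(I_R) = (0,\ldots,0)\bigr) \leq \bigl(2(1-p)\bigr)^{\pentinf{\ell_0/\ell_1}} = u_1.$$
Since the event $\{Y_1 \equiv 0 \text{ on } \llbracket -2\ell_0, +2\ell_0\rrbracket\}$ is contained in $\{Y_1 \equiv 0 \text{ on } I_R\}$, this bound already dominates the probability in the lemma's statement.

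Otherwise, I obtain sub-intervals $\llbracket g_R, h_R\rrbracket \subset I_R$ and $\llbracket g_L, h_L\rrbracket \subset I_L$, each of size $\ell_1$, on which $\phi[\zeta] \equiv 0$. By Table~\ref{tab:antecedents} and the discussion following it, $\zeta$ must then coincide on $\llbracket g_R - 1, h_R + 1\rrbracket$ with one of the four elements of $\phi^{-1}[\overline{\d{0}}]$. Since $\llbracket g_R, h_R\rrbracket$ is itself a sub-interval of $\llbracket -2\ell_0, +2\ell_0\rrbracket$ of size $\ell_1$, the standing hypothesis of the lemma rules out the three antecedents in $\overline{\d{1}} \cup \overline{\d{0}\d{1}}$, leaving only $\zeta \equiv 0$ on $\llbracket g_R - 1, h_R + 1\rrbracket$. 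In particular $\zeta(\llbracket x-1, x\rrbracket) = (0,0)$ for $x := g_R + 1 \in \llbracket \ell_0+2, 2\ell_0+1\rrbracket$; the symmetric argument on $I_L$, with $x' := h_L - 1 \in \llbracket -2\ell_0 - 1, -\ell_0 - 2\rrbracket$, gives $\zeta(\llbracket x', x'+1\rrbracket) = (0, 0)$. Combined with the standing hypothesis $\zeta(\llbracket -\ell_0, +\ell_0\rrbracket) \neq (0,\ldots,0)$, these are exactly the three prerequisites of Lemma~\ref{l:plage-de-zeros}, which then delivers $\P \leq \varepsilon^2$. Combining both cases yields the announced bound $\P \leq \max(\varepsilon^2, u_1)$. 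The only mildly delicate step is the bookkeeping in the second case: verifying that the forbidden antecedents genuinely contradict the hypothesis (which is immediate, since $\llbracket g_R, h_R\rrbracket \subset \llbracket -2\ell_0, +2\ell_0\rrbracket$ already has size $\ell_1$), and that the extracted $(0,0)$-pair positions $x, x'$ land in the precise ranges prescribed by Lemma~\ref{l:plage-de-zeros}.
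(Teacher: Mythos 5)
Your proof is correct and takes essentially the same route as the paper's: the same two lemmas (Lemma \ref{l:plage-de-zeros} and Lemma \ref{l:non-determinisme}) and the same use of Table \ref{tab:antecedents} to eliminate all antecedents of $\overline{0}$ except the null sequence. The only cosmetic difference is the orientation of the dichotomy --- the paper cases on the existence of the $(0,0)$-pairs $x,x'$ and derives by contraposition that $\phi[\zeta]$ cannot vanish on a size-$\ell_1$ subinterval of the relevant flank, whereas you case on whether $\phi[\zeta]$ vanishes on each flank and extract $x,x'$ directly; the resulting bounds are identical.
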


\begin{proof}
Assume that there exists an $x \in \llbracket \ell_0+2, 2\ell_0+1 \rrbracket$ such that $\zeta(\llbracket x-1,x \rrbracket)=(0,0)$, and an $x' \in \llbracket -2\ell_0-1, -\ell_0-2 \rrbracket$ such that $\zeta(\llbracket x',x'+1 \rrbracket)=(0,0)$. By Lemma \ref{l:plage-de-zeros}, $\P_{\xi,\zeta}(Y_1(\llbracket -2\ell_0,+2 \ell_0 \rrbracket)=(0,\ldots,0)) \leq \varepsilon^2$. Now assume that there is no such $x'$, and that there is a sub-interval $\llbracket g,h \rrbracket $ of $\llbracket -2 \ell_0, -\ell_0 -1 \rrbracket$ of size $\ell_1$ on which  $\phi[\zeta]$ equals $(0,\ldots,0)$.  From the results of Table \ref{tab:antecedents}, $\zeta$ must coincide on $\llbracket g-1,h+1 \rrbracket $ with a sequence in $\overline{\d{0}} \cup \overline{\d{1}} \cup \overline{\d{0}\d{1}}$.  From the non-existence of $x'$, $\overline{\d{0}}$ is ruled out, and, given our assumption on $\zeta$, $\overline{\d{1}}$ and $\overline{\d{0}\d{1}}$ are ruled out too. Thus, there cannot be a  sub-interval of size $\ell_1$ of $\llbracket -2 \ell_0, -\ell_0 -1 \rrbracket$ on which  $\phi[\zeta]$ equals $(0,\ldots,0)$. By Lemma \ref{l:non-determinisme}, $\P_{\xi,\zeta}(Y_1(\llbracket -2\ell_0,-\ell_0-1 \rrbracket)=(0,\ldots,0)) \leq \left(2(1-p)\right)^{\pentinf{\ell_0/\ell_1}}$, and, since $\{Y_1(\llbracket -2\ell_0,+2 \ell_0 \rrbracket)=(0,\ldots,0))\} \subset  \{ Y_1(\llbracket -2\ell_0,-\ell_0-1 \rrbracket)=(0,\ldots,0)\}$, we deduce the upper bound $\P_{\xi,\zeta}(Y_1(\llbracket -2\ell_0,+2 \ell_0 \rrbracket)=(0,\ldots,0)) \leq \left(2(1-p)\right)^{\pentinf{\ell_0/\ell_1}}$.
A similar argument holds if we assume the non-existence of an $x  \in \llbracket \ell_0+2, 2\ell_0+1 \rrbracket$ such that $\zeta(\llbracket x-1,x \rrbracket)=(0,0)$.
\end{proof}

For $t \geq 1$, let $B_{t}$ denote the event that $Y_t(\llbracket -2\ell_0,+2 \ell_0 \rrbracket)=(0,\ldots,0)$ and  $Y_{t-1}(\llbracket -\ell_0,+\ell_0 \rrbracket) \neq (0,\ldots,0)$, and, for $1 \leq n \leq t$ and $\sigma \in \{0,1\}^{\Z}$, let 
$C_{\sigma,n,t}$ denote the event that there exists a sub-interval of $\llbracket -2\ell_0,+2 \ell_0 \rrbracket$ of size $\ell_n$ on which $Y_{t-n}$ coincides with $\sigma$, and let 
$B_{\sigma,n,t}$ denote the event that there exists a sub-interval of $\llbracket -2\ell_0,+2 \ell_0 \rrbracket$ of size $\ell_n$ on which $Y_{t-n}$ coincides with $\sigma$ and $Y_{t-n+1}$ coincides with $\phi[\sigma]$.  

From Lemma \ref{l:antecedent-1} and the union bound, using the Markov property at time $t-1$, we deduce that, for all $t \geq 1$, 
\begin{equation}\label{e:borne-1}
\P_{\xi,\zeta}(B_t) \leq \max\left(\varepsilon^2 ,u_1\right) +
\sum_{\sigma \in  \overline{\d{1}}  \cup  \overline{\d{0}\d{1}} }\P_{\xi,\zeta}(B_{\sigma,1,t}).
\end{equation}
Iterating the argument leading to \eqref{e:borne-1}, we obtain the following general result.
\begin{lemma}\label{l:antecedent-2}
For all $1 \leq n \leq t-1$, and $s \in \mathcal{P}'$, as soon as $\ell_{n+1} \geq 1$, 
 \begin{equation}\label{e:borne-2}\P_{\xi,\zeta}(B_{s,n,t}) \leq \P_{\xi,\zeta}(C_{s,n,t}) \leq \left(  \sum_{\sigma \in \phi^{-1}[s]} \P_{\xi,\zeta}(B_{\sigma,n+1,t}) \right) + u_{n+1}.\end{equation}
\end{lemma}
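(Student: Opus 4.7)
The first inequality $\P_{\xi,\zeta}(B_{s,n,t}) \leq \P_{\xi,\zeta}(C_{s,n,t})$ is immediate since the event $B_{s,n,t}$ requires everything that $C_{s,n,t}$ requires (coincidence of $Y_{t-n}$ with $s$ on a sub-interval of $\llbracket -2\ell_0,+2\ell_0\rrbracket$ of size $\ell_n$), and more. So the real content is the second inequality, and the plan is to mimic, one step further back in time, the argument used in the proof of Lemma \ref{l:antecedent-1}: pass from a deterministic-antecedent constraint at time $t-n$ to an antecedent constraint at time $t-n-1$, paying a $u_{n+1}$-size error coming from the non-deterministic steps via Lemma \ref{l:non-determinisme}.

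Concretely, I would decompose $C_{s,n,t}$ over the sub-intervals $I=\llbracket g,h\rrbracket$ of size $\ell_n$ inside $\llbracket -2\ell_0,+2\ell_0\rrbracket$, and for each such $I$ condition on $Y_{t-n-1}$ using the Markov property. For a given $I$, let $\tilde A_I$ be the (random, $Y_{t-n-1}$-measurable) event that there exists a sub-interval $J\subset I$ of size $\ell_{n+1}$ on which $\phi[Y_{t-n-1}]$ coincides with $s$. If $\tilde A_I$ fails, then by Lemma \ref{l:non-determinisme} (with $\zeta_0=Y_{t-n-1}$, $\zeta_1=s$, $\ell=\ell_n$, $m=\ell_{n+1}$), the conditional probability that $Y_{t-n}$ coincides with $s$ on $I$ is at most $(2(1-p))^{\pentinf{\ell_n/\ell_{n+1}}}$. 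If on the contrary $\tilde A_I$ holds, then one invokes the characterization stated just before Section \ref{s:param-list}: since $s\in\mathcal{P}'$, the equality $\phi[Y_{t-n-1}]=s$ on $J$ forces $Y_{t-n-1}$ to coincide on a one-unit enlargement of $J$ (hence in particular on $J$ itself) with one of the finitely many translated antecedents $\sigma\in\phi^{-1}[s]$; combining this with the assumption $Y_{t-n}(I)=s(I)\supset\phi[\sigma]$ on $J$, we conclude that the event $B_{\sigma,n+1,t}$ holds for that $\sigma$.

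Summing the two cases and using the union bound over the (at most $4\ell_0+1$) sub-intervals $I$ of size $\ell_n$ in $\llbracket -2\ell_0,+2\ell_0\rrbracket$ and over the finitely many $\sigma\in\phi^{-1}[s]$ then yields
$$\P_{\xi,\zeta}(C_{s,n,t}) \leq \sum_{\sigma\in\phi^{-1}[s]} \P_{\xi,\zeta}(B_{\sigma,n+1,t}) + (4\ell_0+1)\bigl(2(1-p)\bigr)^{\pentinf{\ell_n/\ell_{n+1}}},$$
which is exactly the required bound given the definition of $u_{n+1}$. The main subtle point, and the step I would take most care with, is the case~$\tilde A_I$ holds: one must verify that the sub-interval $J$ witnessing $\tilde A_I$ lies inside $\llbracket -2\ell_0,+2\ell_0\rrbracket$ (automatic because $J\subset I\subset\llbracket -2\ell_0,+2\ell_0\rrbracket$) and that, on the joint event, $Y_{t-n}$ indeed equals $\phi[\sigma]$ on $J$ rather than merely on some enlargement, so that the definition of $B_{\sigma,n+1,t}$ is truly met; this relies on the fact that $\phi[\sigma]=s$ on $J$ by construction, together with $Y_{t-n}(I)=s(I)$. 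Everything else is a routine union-bound bookkeeping.
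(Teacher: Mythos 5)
Your proposal follows essentially the same route as the paper: the trivial inclusion $B_{s,n,t}\subset C_{s,n,t}$, then a case split at time $t-n-1$ according to whether a deterministic antecedent of $s$ is visible on a sub-interval of size $\ell_{n+1}$, with Lemma \ref{l:non-determinisme} (applied conditionally via the Markov property) handling the ``no antecedent'' case and the characterization of $\phi^{-1}[s]$ for $s\in\mathcal{P}'$ (Table \ref{tab:antecedents} and the remark following it) absorbing the other case into $\bigcup_{\sigma\in\phi^{-1}[s]}B_{\sigma,n+1,t}$. The identification of the error term with $u_{n+1}=(4\ell_0+1)(2(1-p))^{\pentinf{\ell_n/\ell_{n+1}}}$ is also correct.

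The one place where your write-up is looser than the paper, and where a literal reading would not give the bound \eqref{e:borne-2}, is the final bookkeeping. If you take a union bound over all $4\ell_0+1$ candidate intervals $I$ in \emph{both} cases, then each ``$\tilde A_I$ holds'' contribution gets bounded by $\sum_{\sigma}\P_{\xi,\zeta}(B_{\sigma,n+1,t})$, and summing over $I$ produces $(4\ell_0+1)\sum_{\sigma}\P_{\xi,\zeta}(B_{\sigma,n+1,t})$ --- weaker than the stated inequality, in which the factor $4\ell_0+1$ sits only inside $u_{n+1}$. The correct accounting is that the union over $I$ of the events $\{Y_{t-n}(I)=s(I)\}\cap\tilde A_I$ is a \emph{single} event contained in $\bigcup_{\sigma}B_{\sigma,n+1,t}$, so only the union bound over the (four) $\sigma$'s is needed for that part, while the union bound over $I$ is reserved for the $\tilde A_I^c$ part, where it yields $u_{n+1}$. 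The paper enforces exactly this by decomposing $C_{s,n,t}$ into the pairwise \emph{disjoint} events $C_{s,n,t,w}$ indexed by the leftmost witnessing interval, so that $\sum_{w}\P_{\xi,\zeta}(C_{s,n,t,w}\cap D_{s,n,t,w}^c)\leq\P_{\xi,\zeta}\bigl(\bigcup_{\sigma}B_{\sigma,n+1,t}\bigr)$ without any multiplicity in $w$. Your argument is sound once this point is made explicit; as phrased, ``using the union bound over the \dots sub-intervals $I$ \dots and over the finitely many $\sigma$'' conflates the two terms and would, taken at face value, prove only the weaker bound.
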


\begin{proof}
First note that $B_{s,n,t}$ is contained in the event $C_{s,n,t}$, whence the inequality $\P_{\xi,\zeta}(B_{s,n,t}) \leq \P_{\xi,\zeta}(C_{s,n,t})$.
Now decompose $C_{s,n,t}$ into the disjoint union of events $C_{s,n,t,w}$, where $w$ is the leftpoint of the leftmost subinterval $\llbracket u,v\rrbracket$ of $\llbracket -2\ell_0,+2 \ell_0 \rrbracket$ of size $\ell_n$ such that $Y_{t-n}(\llbracket u,v\rrbracket)=s(\llbracket u,v\rrbracket)$.
For such a $w$, let $w'=w+\ell_n-1$ and let $\mathcal{A}$ be the set of sequences $\eta \in \{0,1\}^{\Z}$ such that there is no subinterval of $\llbracket w,w' \rrbracket$ of size $\ell_{n+1}$ on which $\phi[\eta]$ and $s$ coincide.
Let $D_{s,n,t,w}$ denote the event that $Y_{t-n-1} \in \mathcal{A}$.  
By Lemma \ref{l:non-determinisme}, one has that, for every $\eta \in \mathcal{A}$, and every $\chi$,  $\P_{\chi,\eta}(Y_1(\llbracket w,w'\rrbracket)=s(\llbracket w,w'\rrbracket)) \leq  \left(2(1-p)\right)^{\pentinf{\ell_{n}/\ell_{n+1}}}$. 
As a consequence, using the Markov property at time $t-n-1$, we have the bound $\P_{\xi,\zeta}(C_{s,n,t,w} \cap D_{s,n,t,w}) \leq  \left(2(1-p)\right)^{\pentinf{\ell_{n}/\ell_{n+1}}}$. 
On the other hand, if $\eta \notin \mathcal{A}$,  there exists a subinterval $\llbracket g,h\rrbracket \subset \llbracket w,w'\rrbracket$ of size $\ell_{n+1}$ such that $\phi[\eta]$ and $s$ coincide on $\llbracket g,h\rrbracket$, and so there must exist 
$\sigma \in  \phi^{-1}[s]$ such that $\eta$ coincides with $\sigma$ on $\llbracket g-1,h-1\rrbracket$. As a consequence, 
\begin{equation}\label{e:grosse-inclusion}C_{s,n,t,w} \cap D_{s,n,t,w}^c \subset \bigcup_{\sigma} \bigcup_{\llbracket g,h\rrbracket}  \{Y_{t-n}(\llbracket g,h\rrbracket)=s(\llbracket g,h\rrbracket) \} \cap  \{ Y_{t-n-1}(\llbracket g,h\rrbracket)=\sigma(\llbracket g,h\rrbracket)  \},\end{equation}
where the union runs over all possible $\sigma \in  \phi^{-1}[s]$ and all possible 
$\llbracket g,h\rrbracket \subset \llbracket w,w'\rrbracket$ of size $\ell_{n+1}$. Now \eqref{e:grosse-inclusion} shows that 
$C_{s,n,t,w} \cap D_{s,n,t,w}^c \subset \bigcup_{\sigma \in \phi^{-1}[s]} B_{\sigma,n+1,t}$.
Since the $(C_{s,n,t,w})_w$ form a pairwise disjoint family, we deduce that 
$$\sum_{w} \P_{\xi,\zeta}\left(C_{s,n,t,w} \cap D_{s,n,t,w}^c\right) \leq \P_{\xi,\zeta} \left(    \bigcup_{\sigma \in \phi^{-1}[s]} B_{\sigma,n+1,t} \right),$$
whence, using the union bound:
\begin{equation}\label{e:gros-bout-1}\sum_{w} \P_{\xi,\zeta}\left(C_{s,n,t,w} \cap D_{s,n,t,w}^c\right) \leq \sum_{\sigma \in \phi^{-1}[s]} \P_{\xi,\zeta}(B_{\sigma,n+1,t})\end{equation}
On the other hand, we have seen that $\P_{\xi,\zeta}(C_{s,n,t,w} \cap D_{s,n,t,w}) \leq  \left(2(1-p)\right)^{\pentinf{\ell_{n}/\ell_{n+1}}}$, so that, taking into account the fact that the number of possible $w$ is bounded above by $ (4 \ell_0+1)$, 
\begin{equation}\label{e:gros-bout-2}\sum_{w} \P_{\xi,\zeta}\left(C_{s,n,t,w} \cap D_{s,n,t,w}\right) \leq  (4 \ell_0+1) \left(2(1-p)\right)^{\pentinf{\ell_{n}/\ell_{n+1}}}.\end{equation}
To conclude, remember that $\P_{\xi,\zeta}(B_{s,n,t}) \leq \P_{\xi,\zeta}(C_{s,n,t}) = \sum_{w} \P_{\xi,\zeta}(C_{s,n,t,w})$, and combine \eqref{e:gros-bout-1} and  \eqref{e:gros-bout-2}.
\end{proof}
Using Lemma \ref{l:antecedent-2} to bound  $\P(B_{\sigma,1,t})$ for $\sigma \in \overline{\d{1}} \cup \overline{\d{0}\d{1}}$, \eqref{e:borne-1} leads to the following bound, valid for all $t \geq 2$,
\begin{equation}\label{e:biborne-3}
\P_{\xi,\zeta}(B_t) \leq \max\left(\varepsilon^2 , u_1 \right) + 3  u_2 + 
\sum_{\sigma \in \mbox{\normalfont \scriptsize $\overline{\d{1}\d{1}\d{0}\d{0}} \cup \overline{\d{1}\d{0}\d{0}\d{0}} \cup \overline{\d{0}\d{1}\d{1}\d{1}}$}}\P_{\xi,\zeta}(B_{\sigma,2,t})
\end{equation}
Using again Lemma \ref{l:antecedent-2} to bound $\P_{\xi,\zeta}(B_{\sigma,2,t})$ for $\sigma \in \overline{\d{0}\d{1}\d{1}\d{1}}$, \eqref{e:biborne-3} leads to the following bound, valid for all $t \geq 3$,
\begin{equation}\label{e:borne-3}
\P_{\xi,\zeta}(B_t) \leq \max\left(\varepsilon^2 , u_1\right) + 3 u_2 + 4 u_3 +
\sum_{\sigma \in \mbox{\normalfont \scriptsize $\overline{\d{1}\d{1}\d{0}\d{0}} \cup \overline{\d{1}\d{0}\d{0}\d{0}}$} } \P_{\xi,\zeta}(B_{\sigma,2,t})+ \sum_{\sigma \in \mbox{\normalfont \scriptsize $\overline{\d{0}\d{0}\d{0}\d{1}\d{1}\d{0}\d{1}\d{1}} \cup \overline{\d{1}\d{1}\d{1}\d{0}\d{0}\d{1}\d{0}\d{0}}$} }\P_{\xi,\zeta}(B_{\sigma,3,t}).
\end{equation}
Observe that \eqref{e:borne-3} involves sums that run over elements of $\mathcal{P''}$ only, where $$\mathcal{P''}=\overline{\d{1}\d{1}\d{0}\d{0}} \cup \overline{\d{1}\d{0}\d{0}\d{0}} \cup \overline{\d{0}\d{0}\d{0}\d{1}\d{1}\d{0}\d{1}\d{1}} \cup \overline{\d{1}\d{1}\d{1}\d{0}\d{0}\d{1}\d{0}\d{0}}.$$
It turns out that elements of $\mathcal{P''}$ combine nicely with our assumption on the dynamics. Let $H_{n,t}$ denote the event that $X_{t-n}(\llbracket -2 \ell_0,+2 \ell_0  \rrbracket)$ is $\ell_n-$nice. In the sequel, we use the notation $v_n=(4 \ell_0+1) (1-\varepsilon)^{\frac{1}{9}\pentinf{(\ell_n-2)/8} }$. 

\begin{lemma}\label{l:antecedent-3}
For all $1 \leq n \leq t$, and $s \in \mathcal{P}''$, as soon as $\ell_n \geq 10$, 
 \begin{equation}\label{e:borne-4}\P_{\xi,\zeta}(B_{s,n,t} \cap H_{n,t} ) \leq v_n.\end{equation}
\end{lemma}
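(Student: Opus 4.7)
The plan is to bound the probability that, on some sub-interval of $\llbracket -2\ell_0, +2\ell_0\rrbracket$ of size $\ell_n$, the flip/preserve dynamics follows the deterministic image $\phi[s]$ of $s$. The key observation is that each $s \in \mathcal{P''}$ contains local triples equal to $(0,0,1)$ or $(1,0,0)$ at positions forming an arithmetic progression whose step divides $8$: precisely the $Y$-triples whose transition kernel $q$ attains the maximum $\varepsilon$ at environment $(a^*,b^*,c^*)$. Combined with $\ell_n$-niceness of $X_{t-n}$, which forces a positive density of $(a^*,b^*,c^*)$-environment triples in every residue class modulo $8$, this yields many positions at which the probability of matching $\phi[s]$ at the next time step is exactly $1-\varepsilon$.

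The first step is purely combinatorial: by direct inspection of the four periodic families that constitute $\mathcal{P''}$ (two of period $4$ and two of period $8$, translations included), I would verify that for every $s \in \mathcal{P''}$ there exist $\iota_s \in \{1,\ldots,8\}$ and a triple $(u_s,v_s,w_s) \in \{(0,0,1),(1,0,0)\}$ such that both $q[(a^*,b^*,c^*),(u_s,v_s,w_s)] = \varepsilon$ and $(s(x-1),s(x),s(x+1)) = (u_s,v_s,w_s)$ for every $x \equiv \iota_s \pmod 8$. This is possible because both triples $(0,0,1)$ and $(1,0,0)$ appear as local triples of every element of $\mathcal{P''}$ (at residue classes determined by the specific translation), so whichever of the two realises the maximum in \eqref{d:abc-etoile} can be selected.

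Next I would write $B_{s,n,t}$ as the union, over the at most $4\ell_0+1$ admissible left endpoints $u$, of the events $B_{s,n,t,u}$ where the matching occurs on the specific sub-interval $\llbracket u, u+\ell_n-1 \rrbracket$, and bound each contribution separately. Fix such a $u$ and work on $B_{s,n,t,u} \cap H_{n,t}$. The $\ell_n$-niceness of $X_{t-n}(\llbracket -2\ell_0, +2\ell_0\rrbracket)$, applied to this sub-interval (of size $\ell_n \geq 10$), guarantees the existence of at least $\frac{1}{9}\pentinf{(\ell_n-2)/8}$ positions $x \in \llbracket u, u+\ell_n-3\rrbracket \cap (\iota_s + 8\Z)$ such that $(X_{t-n}(x-1),X_{t-n}(x),X_{t-n}(x+1)) = (a^*,b^*,c^*)$. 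At each such $x$ the $Y_{t-n}$-triple is $(u_s,v_s,w_s)$ (since $Y_{t-n}$ matches $s$ on the sub-interval), and therefore
$$\P\bigl(Y_{t-n+1}(x) = \phi[s](x) \,\big|\, X_{t-n}, Y_{t-n}\bigr) = 1 - q[(a^*,b^*,c^*),(u_s,v_s,w_s)] = 1 - \varepsilon.$$

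Invoking the conditional independence of the family $(Y_{t-n+1}(x))_{x \in \Z}$ given $(X_{t-n}, Y_{t-n})$ and bounding all other single-site factors by $1$, I obtain that the conditional probability of matching $\phi[s]$ on the full sub-interval is at most $(1-\varepsilon)^{\frac{1}{9}\pentinf{(\ell_n-2)/8}}$. Integrating against $(X_{t-n},Y_{t-n})$ yields the same bound for $\P_{\xi,\zeta}(B_{s,n,t,u}\cap H_{n,t})$, and a union bound over the $\leq 4\ell_0+1$ values of $u$ produces the prefactor, recovering $v_n$. The main obstacle is the combinatorial verification in the second paragraph, which ensures that the triple realising $q=\varepsilon$ at environment $(a^*,b^*,c^*)$ can always be located at a fixed residue class modulo $8$ within any $s \in \mathcal{P''}$; this is handled by the case analysis of the four periodic families listed above.
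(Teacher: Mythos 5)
Your proposal is correct and follows essentially the same route as the paper's proof: identify a residue class mod $8$ where the $Y$-triple realising $q[(a^*,b^*,c^*),\cdot]=\varepsilon$ occurs in $s$ (with deterministic image $1$, since $f(0,1)=f(1,0)=1$), use $\ell_n$-niceness to find a proportion at least $1/9$ of environment triples equal to $(a^*,b^*,c^*)$ there, multiply the resulting conditionally independent factors $1-\varepsilon$, and take a union bound over the at most $4\ell_0+1$ subintervals. The only cosmetic difference is your centring of triples at $x$ versus the paper's indexing from $x$ to $x+2$.
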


\begin{proof}
Let $\gamma_1=(0,0,1)$ and $\gamma_2=(1,0,0)$, and let $r^* \in \{1,2 \}$ be such that $q((a^*,b^*,c^*),\gamma_{r^*})=\varepsilon$, and consider a  subinterval $\llbracket g,h\rrbracket$ of 
 $\llbracket -2\ell_0,+2 \ell_0 \rrbracket$ of size $\ell_n$.  From the definition of $\mathcal{P}''$, we see that there exists an 
$i \in \{1,\ldots, 8\}$ such that, for all $x \in  \llbracket g, h-2 \rrbracket \cap (i+8 \Z)$,  $s(x,x+1,x+2)=\gamma_{r^*}$, and $\phi[s](x+1)=1$. 
Using the definition of $r^*$, we deduce that, for all $x \in  \llbracket g, h-2 \rrbracket \cap (i+8 \Z)$, on the intersection of the two events $X_{t-n}(x,x+1,x+2) =(a^*,b^*,c^*)$ and $Y_{t-n}(x,x+1,x+2)=s(x,x+1,x+2)$, we have that 
 $\P_{\xi,\zeta}(  Y_{t-n+1}(x+1)=\phi[s](x+1) | X_{t-n},Y_{t-n} ) = (1-\varepsilon)$. Now, on  $H_{n,t}$, we have that at least one ninth of the $x \in  \llbracket g, h-2 \rrbracket \cap (i+8 \Z)$ are such that $X_{t-n}(x,x+1,x+2) =(a^*,b^*,c^*)$. Since there are at least $\pentinf{(\ell_n-2)/8}$ integers in $\llbracket g, h-2 \rrbracket \cap (i+8 \Z)$, we deduce that, on the intersection of the two events $Y_{t-n}(\llbracket g,h \rrbracket)=s(\llbracket g,h \rrbracket)$ and $H_{n,t}$, we have the bound
 $\P_{\xi,\zeta}(  Y_{t-n+1}(\llbracket g, h \rrbracket) = \phi[s](\llbracket g, h \rrbracket)  | X_{t-n},Y_{t-n} ) \leq (1-\varepsilon)^{ \frac{1}{9}\pentinf{(\ell_n-2)/8}  }$, so that 
 $$\P_{\xi,\zeta} \left(\{ Y_{t-n+1}(\llbracket g, h \rrbracket) = \phi[s](\llbracket g, h \rrbracket) \} \cap \{  Y_{t-n}(\llbracket g, h \rrbracket) = s(\llbracket g, h \rrbracket) \} \cap H_{n,t} \right) \leq (1-\varepsilon)^{ \frac{1}{9}\pentinf{(\ell_n-2)/8}  }.$$
 Taking the union bound over the $(4 \ell_0+1)$ possible intervals $\llbracket g,  h\rrbracket$, we get the desired bound.
 \end{proof}
Combining \eqref{e:borne-3} and \eqref{e:borne-4} we deduce that, for $t \geq 3$,  
\begin{equation}\label{e:borne-5}
\P_{\xi,\zeta}(B_t) \leq \max\left(\varepsilon^2 , u_1\right) + 3 u_2+ 4 u_3+
 8 \left(\P_{\xi,\zeta}(H_{2,t}^c) + v_2 \right) + 16  \left(\P_{\xi,\zeta}(H_{3,t}^c) + v_3 \right).
\end{equation}
Arguing in the same way as for $B_t$, we can combine Lemma \ref{l:antecedent-2} and Lemma \ref{l:antecedent-3} to obtain the following bounds. 
For $s =  \overline{\d{1}}$, 
 \begin{equation}\label{e:biborne-4a}
\P_{\xi,\zeta}(C_{s,1,t}) \leq  u_2 +
 4 \left(\P_{\xi,\zeta}(H_{2,t}^c) + v_2\right). 
\end{equation}
For $s \in \overline{\d{0}\d{1}}$, 
\begin{equation}\label{e:biborne-4b}
\P_{\xi,\zeta}(C_{s,1,t}) \leq  u_2 +2 u_3 + 2 \left(\P_{\xi,\zeta}(H_{2,t}^c) +v_2 \right)+8 \left(\P_{\xi,\zeta}(H_{3,t}^c) + v_3\right). 
\end{equation}
For $s \in \overline{\d{0}\d{1}\d{1}\d{1}}$, 
\begin{equation}\label{e:biborne-4c}
\P_{\xi,\zeta}(C_{s,2,t}) \leq  u_3 + 4 \left(\P_{\xi,\zeta}(H_{3,t}^c) + v_3 \right). 
\end{equation}

\subsection{Dynamics of the rightmost $1$}

We start with an initial configuration $Y_0=\zeta$ satisfying $\zeta(0)=1$, $\zeta(x)=0$ for $x \geq 1$, and such that the total number of $x$ such that $\zeta(x)=1$ is finite. Due to \eqref{e:zero-zero}, we have that, almost surely, the number of $x$ such that $Y_n(x)=1$ is finite for all $n \geq 1$. We then let
$$R_n = \sup \{ x \in \Z; \ Y_n(\llbracket x,x+2 \rrbracket)  \} = (1,0,0) ,  \  L_n = \sup \{ x \leq R_n ; \ Y_n(\llbracket x-2,x \rrbracket)  \} = (0,0,1)\},$$
with the usual convention that $\sup \emptyset = -\infty$. Note that $R_n$ coincides with the location of the righmost $1$ in $Y_n$, while $L_n$ coincides with the location of the $1$ closest to $R_n$ that admits two consecutive $0$s at its left. With our assumptions, one has that $-\infty<L_0 \leq R_0=0$, and, moreover, provided that $Y_n  \neq \overline{0}$, using the fact that  there is a.s. a finite number of $x$ such that $Y_n(x)=1$,
we have that a.s. $-\infty < L_n \leq R_n < +\infty$. We then let $$G_{n+1} =\sup \{  m \geq 1 ;   Y_{n+1}(\llbracket   R_n-(m-1) , R_n \rrbracket)     = (0,\ldots, 0)  \},$$ with the convention that $\sup \emptyset = 0$ (for the sake of definiteness, we also let $G_{n+1}=0$ when $R_n=+\infty$).
On the event $\{Y_n  \neq \overline{0}, \ R_n<+\infty, \ R_{n+1}=R_n+1\}$, we have that $G_{n+1}$ is the size of the gap (interval of $0$s) between the rightmost $1$ and the second rightmost $1$ in $Y_{n+1}$ ($+\infty$ if there is no second rightmost $1$).
On the event $\{Y_n  \neq \overline{0}, \ R_{n}<+\infty, \ R_{n+1} \neq R_n+1 \}$, we have that a.s. $R_{n+1}=R_n-G_{n+1}$. 

We now define the event $C_{n+1}$ by
$$C_{n+1}= \{ R_{n+1} = R_n+1 \} \cup \{Y_{n+1}(L_n-1) = 1  \},$$ where, for the sake of definiteness, we let $Y_{n+1}(\pm \infty) =0$. 
From \eqref{d:epsilon} and the definition of $R_n$ and $L_n$, we deduce the two following bounds:
\begin{equation}\label{e:key-1} \mbox{On $\{-\infty < L_n \leq R_n < +\infty\}$, one has that }  \P_{\xi,\zeta}(C_{n+1}^c|X_n,Y_n) \leq \varepsilon^2.\end{equation}
\begin{equation}\label{e:key-2} \mbox{On $\{-\infty < L_n \leq R_n < +\infty\}$, one has that }  \P_{\xi,\zeta}(R_{n+1} \neq R_n+1 |X_n,Y_n,G_{n+1}) \leq \varepsilon.\end{equation}

\begin{lemma}\label{l:danger}
For all $n \geq 0$, one has the following inclusion between events as soon as $\lambda \geq 1$ (see Section \ref{s:param-list}):
$$\{G_{n+1} \geq 2 \lambda , R_{n+1}=R_n+1\} \cap  \bigcap_{k=1}^{\lambda} C_{n+1+k}  \subset \bigcap_{k=1}^{\lambda} \{G_{n+1+k} \leq 2 \lambda - 1 \} \cup \{R_{n+1+k}=R_{n+k}+1 \}.$$ 
\end{lemma}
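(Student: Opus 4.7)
The plan is to argue by contradiction. Suppose some sample point in the left-hand event fails the right-hand intersection, and let $k^* \in \llbracket 1, \lambda \rrbracket$ be the smallest index at which both $G_{n+1+k^*} \geq 2\lambda$ and $R_{n+1+k^*} \neq R_{n+k^*}+1$ hold. First, I will unpack $C_{n+1+k^*}$: the failure of the rightmost to advance forces $Y_{n+1+k^*}(L_{n+k^*} - 1) = 1$. Since $G_{n+1+k^*} \geq 2\lambda$ makes $Y_{n+1+k^*}$ identically zero on $\llbracket R_{n+k^*} - 2\lambda + 1, R_{n+k^*}\rrbracket$ and since $L_{n+k^*} \leq R_{n+k^*}$ always, the position $L_{n+k^*} - 1$ must lie strictly to the left of this interval, giving the key inequality $L_{n+k^*} \leq R_{n+k^*} - 2\lambda + 1$.

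Next, I will build a backward chain of ones in $Y_{n+k^*}$. The upper bound on $L_{n+k^*}$ translates, via the definition of $L$, into the assertion that no $x \in \llbracket R_{n+k^*} - 2\lambda + 2, R_{n+k^*}\rrbracket$ satisfies $Y_{n+k^*}(\llbracket x-2,x\rrbracket) = (0,0,1)$. Starting from $Y_{n+k^*}(R_{n+k^*}) = 1$, I construct iteratively $y_0 = R_{n+k^*}, y_1, y_2, \ldots$ by choosing $y_{i+1} \in \{y_i - 1, y_i - 2\}$ with $Y_{n+k^*}(y_{i+1}) = 1$ whenever $y_i$ still lies in the above range. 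Writing $d_i = R_{n+k^*} - y_i$, we have $d_0 = 0$ and $d_{i+1} - d_i \in \{1, 2\}$. Independently, the zero-preservation property \eqref{e:zero-zero} applied iteratively to the initial zero region of $Y_{n+1}$ given by $G_{n+1} \geq 2\lambda$ produces a cone of zeros in $Y_{n+k^*}$ on $\llbracket R_n + k^* - 2\lambda, R_n - k^* + 1\rrbracket$. In the case with no prior failure, i.e.\ $R_{n+1+j} = R_{n+j}+1$ for all $j \in \llbracket 0, k^* - 1\rrbracket$, one has $R_{n+k^*} = R_n + k^*$ and the cone rewrites as $\llbracket R_{n+k^*} - 2\lambda, R_{n+k^*} - 2k^* + 1\rrbracket$, which in the $d$-parametrization is the interval $[2k^* - 1, 2\lambda]$. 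Because $(d_i)$ starts at $0$ and increments by $1$ or $2$, it must enter this interval at some first step $i^*$, with $d_{i^*-1} \leq 2k^* - 2 \leq 2\lambda - 2$ (using $k^* \leq \lambda$), so the chain extension remains valid up to step $i^*$. One then obtains $Y_{n+k^*}(y_{i^*}) = 1$ from the chain and $Y_{n+k^*}(y_{i^*}) = 0$ from the cone, the sought contradiction.

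The hardest point will be the reduction to the "no prior failure" case. The minimality of $k^*$ only forbids that both disjuncts of the conclusion fail simultaneously at some $j < k^*$, so in principle a "soft failure" could occur at some $j_0 < k^*$ where $R_{n+1+j_0} \neq R_{n+j_0}+1$ and the other disjunct $G_{n+1+j_0} \leq 2\lambda - 1$ happens to absorb it. In that scenario $R_{n+k^*}$ is no longer pinned at $R_n + k^*$, and the chain and the cone would live in different regions of space. I would handle this either by strengthening the induction so that the statement at step $k$ also tracks the bound $L_{n+1+k} \geq R_{n+1+k} - 2\lambda + 2$ (in which case the conclusion at step $k+1$ follows directly from $C_{n+2+k}$ without going through the contradiction argument), or by restarting the cone propagation from the last failure time with a suitably refined initial condition. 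The chain-versus-cone mechanism itself is robust to these modifications.
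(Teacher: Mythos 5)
Your argument for the ``no prior failure'' case is correct, and the backward-chain-of-ones mechanism colliding with the cone of zeros is a valid (and slightly different) way to extract the contradiction. But the general case is a genuine gap, not a technicality, and neither of your two proposed repairs works as described. The soft failures you identify --- times $j_0<k^*$ at which $R_{n+1+j_0}\neq R_{n+j_0}+1$ but $G_{n+1+j_0}\leq 2\lambda-1$, so that $R$ jumps backward by $G_{n+1+j_0}$ --- are precisely the events the lemma must control (they are what the random walk $V$ is designed to dominate), so they cannot be wished away. Your fix (a) does not close as an induction: the hypothesis $L_{n+k}\geq R_{n+k}-2\lambda+2$ alone does not propagate to time $n+k+1$, because the $1$s lying below $L_{n+k}-2$ can creep up by one site per step and merge with the descendants of the block $\llbracket L_{n+k},R_{n+k}\rrbracket$, destroying every $(0,0,1)$ pattern near the top and sending $L_{n+k+1}$ far to the left; to rule this out you need to know that the ``other'' $1$s are still separated from the offspring by at least two zeros, which is geometric information not contained in the single inequality you propose to track. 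Your fix (b) fails because after a soft failure the new rightmost $1$ sits at $R_{n+j_0}-G_{n+1+j_0}$ with only the $G_{n+1+j_0}\leq 2\lambda-1$ zeros above it and no control below it, so there is no fresh band of $2\lambda$ zeros from which to relaunch a cone.

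The paper's proof resolves exactly this point by never re-anchoring anything at the current $R$. It keeps the original cone from time $n+1$: the configuration on $\llbracket R_n-2\lambda+k+1,+\infty\llbracket$ at time $n+k$ is the offspring of the single $1$ at $R_n+1$, hence confined to $\llbracket R_n+2-k,R_n+k\rrbracket$, while everything else stays left of $R_n-2\lambda+k$; the events $C_{n+1+j}$ guarantee the offspring never dies (either $R$ advances, or a $1$ reappears at $L_{n+j}-1$, which is still inside the cone). This yields the two-sided bound $R_n+2-k\leq L_{n+k}\leq R_{n+k}\leq R_n+k$ for all $k\leq\lambda$, \emph{whether or not soft failures have occurred}, since after a backward jump the new $R_{n+k}$ is still a point of the offspring and hence still in the cone. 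Then $G_{n+1+k}\leq R_{n+k}-L_{n+k}+1\leq 2k-1\leq 2\lambda-1$ whenever $R$ fails to advance. If you want to salvage your contradiction scheme, the induction hypothesis you must carry is this full cone confinement (offspring present, contained in $\llbracket R_n+2-k,R_n+k\rrbracket$, and separated from the remaining $1$s by the band of zeros $\llbracket R_n-2\lambda+k+1,R_n-k\rrbracket$), not the single inequality on $L$; with that hypothesis the contradiction argument becomes unnecessary, as you yourself anticipate.
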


\begin{proof}
Assuming that $G_{n+1} \geq 2 \lambda$ and $R_{n+1}=R_n+1$, we also have that $L_{n+1}=R_n+1$, and the configuration of $1$s in $Y_{n+1+k}(\llbracket R_n- 2\lambda+k+1, +\infty \llbracket)$ for $k=1,\ldots, \lambda$ is formed by the "offspring" of the single $1$ at $R_{n}+1$ at time $n+1$. As a consequence of $C_{n+2} \cap \cdots \cap C_{n+\lambda}$, this "offspring" is present at times $t=n+2,\ldots, n+\lambda$, so we have, for all $k=1,\ldots, \lambda$, that $R_{n}+1-(k-1) \leq L_{n+k} \leq R_{n+k} \leq R_{n}+1+(k-1)$. Moreover, whenever $R_{n+k+1} \neq R_{n+k}+1$, the occurrence of $C_{n+k+1}$ implies that we have
$Y_{n+k+1}(L_{n+k}-1)=1$, so that $G_{n+k+1} \leq R_{n+k}-L_{n+k}+1 \leq 2 \lambda-1 $.
\end{proof}
As in \cite{BraNeu} (Section 3), we use \eqref{e:key-1}, \eqref{e:key-2} and Lemma \ref{l:danger} to couple $R_0,\ldots, R_T$ with a random walk $V_0,\ldots, V_T$ on $\Z$, started at $V_0=0$, with i.i.d. steps whose distribution is as follows:   $$\left\lbrace \begin{array}{ll} +1 & \mbox{ with probability }1-\varepsilon \\ -2 \lambda & \mbox{ with probability } \varepsilon \end{array}\right. ,$$
and with the following estimate:
\begin{equation}\label{l:compare-prob}\P_{\xi,\zeta}\left(  \vphantom{\sum} \left\{  \forall i \in \llbracket 0,T \rrbracket,   \   R_i \geq V_i  \right\}^c  \right) \leq T \varepsilon^2+\pentsup{T/(\lambda+1)} \varepsilon.\end{equation}

We shall then need two estimates, corresponding\footnote{We could not find a proper reference for the large deviations bound $P(V_{4L}<12L/5) \leq C_1 e^{-\gamma_1 L}$ in \cite{BraNeu}, which is quoted there without proof. Here, we give a complete proof, with a bound leading to slightly worse (but sufficient for our purposes) estimates.} to Lemma 3.14 in \cite{BraNeu}. 

\begin{lemma}\label{l:grandes-dev}
As soon as $(1+2 \lambda) \varepsilon \leq 1/5$, 
\begin{equation}\label{e:grandes-dev-1}\P_{\xi,\zeta}(V_T \leq 12K/5) \leq e^{-3 \left(\frac{1/5}{1+2 \lambda}-\varepsilon/2 \right) K }.\end{equation}
As soon as $(1+2 \lambda) \varepsilon \leq 1/40$,
\begin{equation}\label{e:grandes-dev-2}\P_{\xi,\zeta}(V_n \leq -K/5 \mbox{ \normalfont  for some }1 \leq n \leq T) \leq  T e^{-3 \left(\frac{1/40}{1+2 \lambda}-\varepsilon/2 \right) K }.\end{equation}

\end{lemma}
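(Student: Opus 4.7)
The plan is to recast each of the two events as a binomial tail event, and then apply a Chernoff-type estimate. Let $N_n$ denote the number of ``$-2\lambda$''-steps among the first $n$ steps of $V$, so that $N_n \sim \mathrm{Bin}(n,\varepsilon)$ and
\begin{equation*}
V_n \;=\; n \;-\; (1+2\lambda)\, N_n.
\end{equation*}
Hence $\{V_T \leq 12K/5\} = \bigl\{N_T \geq 8K/[5(1+2\lambda)]\bigr\}$, and $\{V_n \leq -K/5\} = \bigl\{N_n \geq (n+K/5)/(1+2\lambda)\bigr\}$; each event is the statement that $N_n$ substantially exceeds its mean $n\varepsilon$.

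For the tail bound, I would use the exponential Markov inequality: for any $\theta > 0$ and $k > 0$,
\begin{equation*}
\P\bigl(\mathrm{Bin}(n,\varepsilon) \geq k\bigr) \;\leq\; e^{-\theta k}(1-\varepsilon + \varepsilon e^\theta)^n \;\leq\; \exp\!\bigl(-\theta k + n\varepsilon(e^\theta - 1)\bigr),
\end{equation*}
where the second step uses $1+x \leq e^x$. Taking $\theta = \ln 2$ gives the clean form $\P(N_n \geq k) \leq 2^{-k}\, e^{n\varepsilon}$. For the first inequality of the lemma, setting $n = T = 4K$ and $k = 8K/[5(1+2\lambda)]$ yields the bound $\exp\!\bigl(-8(\ln 2)K/[5(1+2\lambda)] + 4K\varepsilon\bigr)$. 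Comparing this exponent with $-3K\bigl(\tfrac{1/5}{1+2\lambda} - \tfrac{\varepsilon}{2}\bigr)$ reduces to checking $(1+2\lambda)\varepsilon \leq (16\ln 2 - 6)/25 \approx 0.2036$, which is implied by the hypothesis $(1+2\lambda)\varepsilon \leq 1/5$.

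For the second inequality, I would apply the same Chernoff estimate to $N_n$ with $k = (n+K/5)/(1+2\lambda)$ separately for each $n \in \{1,\ldots,T\}$. Using $n+K/5 \geq K/5$ and $n \leq T = 4K$ produces, uniformly in $n$, the bound $\exp\!\bigl(-(\ln 2)K/[5(1+2\lambda)] + 4K\varepsilon\bigr)$. An analogous arithmetic check shows this is bounded by $\exp\!\bigl(-3K\bigl(\tfrac{1/40}{1+2\lambda} - \tfrac{\varepsilon}{2}\bigr)\bigr)$ precisely when $(1+2\lambda)\varepsilon \leq (8\ln 2 - 3)/100 \approx 0.0254$, which is implied by the hypothesis $(1+2\lambda)\varepsilon \leq 1/40$. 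A union bound over the $T$ values of $n$ then produces the factor of $T$ in the final estimate. The only real obstacle is cosmetic: one must verify that the constants coming from the explicit choice $\theta = \ln 2$ are sharp enough to satisfy the stated thresholds $1/5$ and $1/40$. Both checks succeed with small slack, as computed above; if more margin were desired, one could optimize $\theta$ case by case.
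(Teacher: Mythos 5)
Your proof is correct, and its skeleton is the same as the paper's: both write $V_n = n-(1+2\lambda)N_n$ with $N_n\sim\mathrm{Bin}(n,\varepsilon)$, reduce each event to an upper tail for $N_n$, and finish with a union bound over $n$ for the second estimate. The only real difference is the concentration inequality: the paper invokes a relative-deviation Chernoff bound (Theorem 2.3(b) of McDiarmid), which for $u\geq 1$ gives $\P(N_n\geq (1+u)\varepsilon n)\leq e^{-\frac{3}{8}un\varepsilon}$, and uses the hypotheses $(1+2\lambda)\varepsilon\leq 1/5$ (resp.\ $1/40$) solely to guarantee $u\geq 1$; the exponent $-3K\bigl(\tfrac{1/5}{1+2\lambda}-\tfrac{\varepsilon}{2}\bigr)$ then drops out exactly. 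You instead use the bare exponential Markov inequality with the fixed choice $\theta=\ln 2$ and verify numerically that the resulting exponent beats the stated one under the same hypotheses. Your route is more self-contained (no external citation), at the cost of the margins being razor-thin ($0.2036$ versus $1/5$, and $0.02545$ versus $1/40$) — your arithmetic checks out, but it is worth making those two comparisons fully explicit rather than leaving them as approximate decimals, since the whole argument hinges on them.
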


\begin{proof}
For $0 \leq n \leq T$, let us denote by $N_n$ the number of indices $0 \leq k \leq n$ such that $V_k=-2 \lambda$. Thus, we can write $V_n=-(2\lambda)N_n+(n-N_n)=n-(1+2 \lambda)N_n$, so we have that, for every $\ell$, 
$$\P_{\xi,\zeta}(V_n \leq \ell) = \P_{\xi,\zeta} \left( N_n \geq \frac{n-\ell}{1+2 \lambda} \right).$$
Noting that $\E(V_n)=\varepsilon n$, and using Theorem 2.3 (b) of \cite{McD}, we have that, as soon as $u \geq 1$, 
\begin{equation}\label{e:mcd}\P_{\xi,\zeta}( V_n \geq (1+u) \varepsilon n   ) \leq  e^{-\frac{3}{8} u n \varepsilon}.\end{equation}
Using $n=T=4K$, $\ell=12K/5$, and $u$ defined by $(1+u) \varepsilon n = \frac{n-\ell}{1+2 \lambda}$,  we deduce \eqref{e:grandes-dev-1}, where we use the condition  $(1+2 \lambda) \varepsilon \leq 1/5$ to ensure that $u \geq 1$. To get \eqref{e:grandes-dev-2}, we note that, by the union bound, $\P_{\xi,\zeta}(V_n \leq -K/5 \mbox{ for some }1 \leq n \leq T) \leq \sum_{n=1}^{T} \P_{\xi,\zeta}(V_n \leq -K/5)$, and observe that, for every $1 \leq n \leq T$, the bound provided by \eqref{e:mcd}, with $\ell=-K/5$ and $u$ defined by $(1+u) \varepsilon n =  \frac{n-\ell}{1+2 \lambda}$, is bounded above by $e^{-3 \left(\frac{1/40}{1+2 \lambda}-\varepsilon/2 \right) K }$, with the condition  $(1+2 \lambda) \varepsilon \leq 1/40$ ensuring that $u \geq 1$.
\end{proof}

\section{Proof of Theorem \ref{t:letheo}}\label{s:preuve-theo}

In this section, we put together the different pieces leading to the proof of Theorem \ref{t:letheo}. Following \cite{BraNeu}, the key idea is to show that, working on a sufficiently large scale, the process of occurrence of $1$s in $Y_t$ can be compared to super-critical oriented percolation. To this end, we introduce the following definition.

\begin{madef}
A pair $(z,k)$ with $z \in \Z$ and $k \in \N$ is said to be good if:
 \begin{itemize}
 \item[(i)] $Y_{kT}(zK+I) \neq (0,\ldots, 0),$
 \item[(ii)] $Y_{kT}(zK+J_{-}) \notin \mathcal{G} , Y_{kT}(zK+J_{+}) \notin \mathcal{G}$,
  \item[(iii)] $Q_{\ell_3} \left( (X_{kT}(zK+x))_{x \in \Z} \right) \leq \theta$ ,
 \end{itemize}
 where $\mathcal{G}$ is defined as the set of $\{0,1\}-$valued sequences which coincide with an element of $\overline{\d{\normalfont 1}} \cup \overline{\d{\normalfont 0}\d{\normalfont 1}}$ on an interval of size $\geq \ell_1$, or which coincide with an element of $\overline{\d{\normalfont 0}\d{\normalfont 1}\d{\normalfont 1}\d{\normalfont1}}$ on an interval of size $\geq \ell_2$. 
\end{madef}

We first note that there exist initial conditions $\xi,\zeta$ such that $(0,0)$ is good. This is obvious for conditions (i) and (ii), taking e.g. $\zeta(x)$ equal to $1$ for $x=0$, and $\zeta(x)=0$ for $x \neq 0$. As regards (iii), starting with any initial condition $\xi$, inequality \eqref{e:merci-Markov} shows that $\P_{\xi} \left( Q_{\ell_3}(X_K) > \theta  \right) \leq  \theta^{-1} K (e_{K,\ell_3} + |J|d_K)$, so that, since the r.h.s. goes to zero as $p \to 1
$, uniformly with respect to $F$, suitable initial conditions exist as soon as $p$ is close enough to $1$. In the sequel, we implicitly assume that all the configurations $\zeta$ appearing in the various results have a finite number of $1$s (provided that $Y_0$ has this property, it is a.s. the case for every subsequent $Y_t$).

We now have to prove the key result enabling the comparison with oriented percolation. 
\begin{prop}\label{p:good}
If $(0,0)$ is good, $\lim_{p \to 1}\P_{\xi,\zeta} \left(\mbox{\normalfont $(1,1)$ is good} \right)=1$, where the limit is uniform w.r.t. $\xi,\zeta,F$.
\end{prop}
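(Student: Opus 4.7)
The plan is to verify each of the three conditions defining goodness for the pair $(1,1)$: namely, at time $T$ and centered at position $K$, (i$'$) $Y_T(K+I)\neq(0,\ldots,0)$, (ii$'$) $Y_T(K+J_\pm)\notin\mathcal{G}$, and (iii$'$) $Q_{\ell_3}((X_T(K+x))_{x\in\Z})\leq\theta$. Each will be shown to fail with probability tending to $0$ as $p\to 1$, uniformly in $\xi,\zeta,F$; the conclusion then follows by the union bound.

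Condition (iii$'$) is the easiest: a direct adaptation of the derivation of \eqref{e:controle-nice-3} (conditioning on $X_{3K}$, applying the Markov property and \eqref{e:merci-Markov}, then using translation invariance) gives $\P_\xi(Q_{\ell_3}((X_T(K+x))_{x\in\Z})>\theta) \leq \theta^{-1}K(e_{K,\ell_3}+|J|d_K)$, which vanishes as $p\to 1$ in view of the parameter choices of Section \ref{s:param-list} and \eqref{e:epsilon-petit}. Condition (ii$'$) is handled by a union bound over forbidden patterns: for each translate $s$ of a periodic sequence in $\overline{\d{\normalfont 1}}\cup\overline{\d{\normalfont 0}\d{\normalfont 1}}$ and each subinterval of $K+J_\pm$ of size $\ell_1$, the bounds \eqref{e:biborne-4a}, \eqref{e:biborne-4b} on $\P(C_{s,1,T})$ apply after spatial translation; a similar argument using \eqref{e:biborne-4c} covers the translates of $\overline{\d{\normalfont 0}\d{\normalfont 1}\d{\normalfont 1}\d{\normalfont 1}}$ on subintervals of size $\ell_2$. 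Summing over the $O(|J|)$ interval positions and the finitely many periodic translates, the dominant error terms are $u_2, u_3, v_2, v_3$ and $\P(H_{n,T}^c)$, all of which vanish as $p\to 1$ by the parameter choices and by \eqref{e:controle-nice} applied with condition (iii) at $(0,0)$.

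Condition (i$'$) is the substantive step and combines both main tools of Section \ref{s:dynamics-flip-preserve}. From condition (i), pick $x_0\in I$ with $Y_0(x_0)=1$. Translating by $x_0$, one couples the rightmost descendant $R_t$ with the random walk $V_t$ via \eqref{l:compare-prob}; the large deviation estimates \eqref{e:grandes-dev-1} and \eqref{e:grandes-dev-2} then guarantee, with probability tending to $1$, that $R_T\geq x_0+12K/5$ and $R_t\geq x_0-K/5$ throughout $\llbracket 1,T\rrbracket$, so the rightmost $1$ sweeps entirely through $K+I$. To ensure that this sweep leaves a $1$ behind inside $K+I$ at time $T$, we apply the no-large-gap bound \eqref{e:borne-5}, suitably translated in space: the event $\{Y_T(K+I)=(0,\ldots,0)\}$ would force the occurrence of a $B_t$-like event at some time $t\in\llbracket 1,T\rrbracket$ and some spatial shift inside $K+I$, each bounded by \eqref{e:borne-5}. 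Summing over $T$ times and $O(K)$ positions via the union bound produces an overall bound in terms of $\max(\varepsilon^2,u_1)$, $u_2$, $u_3$, $v_2$, $v_3$, and $\P(H_{n,t}^c)$; conditions (ii) and (iii) at $(0,0)$, propagated via Section \ref{s:mixing}, control the antecedent structure and the environmental niceness throughout.

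The main obstacle lies precisely in condition (i$'$): one must simultaneously rule out a wide interval of $0$s forming at any time $t \leq T$ and any position inside $K+I$, track the rightmost $1$ through its random-walk excursion, and propagate the niceness of the environment forward across $T$ time steps. The delicate scaling of $K, T, \ell_0, \ell_1, \ell_2, \ell_3, \lambda$ and $\theta$ from Section \ref{s:param-list} is precisely what is required to force every resulting error term to $0$ uniformly as $p\to 1$.
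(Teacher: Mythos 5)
Your treatment of conditions (ii$'$) and (iii$'$) matches the paper's (its Lemma \ref{l:peuplement-3} and the appeal to \eqref{e:controle-nice-3}), but your argument for condition (i$'$) has a genuine gap. You claim that, given that the rightmost descendant of some $x_0\in I$ sweeps rightward past $x_0+12K/5$, the event $\{Y_T(K+I)=(0,\ldots,0)\}$ ``would force the occurrence of a $B_t$-like event at some time $t\in\llbracket 1,T\rrbracket$ and some spatial shift inside $K+I$.'' This reduction is not justified, and it fails as stated. The events $B_t$ require a large block of $0$s to appear \emph{in a single time step} over a region whose centre contained a $1$ at the previous step; but the rightmost $1$ moves ballistically to the right (step $+1$ with probability $1-\varepsilon$), exits $K+I=\llbracket 0,2K\rrbracket$ after roughly $2K<T$ steps, and the gap it leaves behind can grow gradually, one site per time step, without any single-step large-gap creation ever occurring inside $K+I$. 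This slow erosion is exactly the scenario the quantities $G_{n+1}$ and the events $C_{n+1}$ of Section \ref{s:dynamics-flip-preserve} are designed to monitor, and it is not controlled by \eqref{e:borne-5} alone. Note also that a backward recursion on $B_t$-type events cannot run for $T=4K$ steps, since each application of Lemma \ref{l:antecedent-2} halves the spatial window ($\ell_0\to\ell_1\to\ell_2\to\ell_3$) and the paper only iterates it three times.

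What is missing is the paper's two-stage mechanism. First (Lemma \ref{l:peuplement-1}) one shows a $1$ enters the narrow central window $J^2_+$, centred at $3K/2$, at some time $t\le T$; crucially, one tracks the rightmost $1$ \emph{to the left of} $J^2_+$, using the independence of the dynamics on either side of $J^2_+$ until a $1$ enters it, which also disposes of the objection that $x_0$ need not be the global rightmost $1$. Second (Lemma \ref{l:peuplement-2}), a repositioning algorithm: whenever the $1$s in $J^2_+$ die out at some time $\sigma_i$, the no-large-gap bounds \eqref{e:borne-6}--\eqref{e:borne-8} guarantee that with high probability a $1$ survives in $J^1_+\cup J^3_+$, from which the rightmost-$1$/leftmost-$1$ tracking is restarted, with \eqref{e:grandes-dev-2} keeping the tracked $1$ inside $\llbracket K,2K\rrbracket$ during each excursion; the combinatorial point that at most $T/(K/10)=40$ restarts can occur (a $1$ needs at least $K/10$ steps to return to $3K/2$) makes the union bound close. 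Without this repositioning step, or an equivalent mechanism ensuring that a $1$ is actually \emph{present in} $K+I$ at the specific time $T$ rather than merely having passed through it, your proof of condition (i$'$) does not go through.
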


To establish Proposition \ref{p:good}, we follow the two-step strategy of \cite{BraNeu}, first showing that the interval $Y_t(J_+^2)$ has a large probability of containing a $1$ at a certain time $0 \leq t \leq T$ (Section 3 of \cite{BraNeu}), then using a "repositioning algorithm" to deduce that $Y_T(\llbracket K,2K\rrbracket)$ has a large probability of containing a $1$ (Section 4 of \cite{BraNeu}). The proofs follow \cite{BraNeu} rather closely, so we allow ourselves to be sketchy here and there, referring to \cite{BraNeu} for additional explanations.

\begin{lemma}\label{l:peuplement-1}
If $(0,0)$ is good, $\lim_{p \to 1}\P_{\xi,\zeta}\left(\mbox{\normalfont For some }0 \leq t \leq T, Y_t(J^2_+) \neq (0,\ldots,0)\right)=1$, unif. w.r.t. $\xi,\zeta,F$.
\end{lemma}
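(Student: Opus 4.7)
The plan is to apply the rightmost--$1$ random walk comparison developed in Section \ref{s:dynamics-flip-preserve}, in order to force the rightmost 1 of $Y_t$ to visit the window $J_+^2$ at some time in $\llbracket 0, T \rrbracket$. By the implicit finite-support assumption on $Y_0$, the quantity $R_0 = \sup\{ x \in \Z : Y_0(x) = 1\}$ is well-defined, and condition (i) gives $R_0 \geq -K$. The argument naturally splits into the principal regime $R_0 \leq K$, handled below, and the residual regime $R_0 > 8K/5$, which will be the main obstacle.

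In the principal regime, translating Section \ref{s:dynamics-flip-preserve} so that the walk starts at $V_0 = R_0$, the bound \eqref{l:compare-prob} gives $R_i \geq R_0 + V_i$ for every $i \in \llbracket 0, T \rrbracket$, outside an event of probability at most $T \varepsilon^2 + \pentsup{T/(\lambda+1)}\varepsilon$. With the parameter choices of Section \ref{s:param-list} ($T = 4K$, $K = 10\pentsup{\varepsilon^{-3/2}}$, $\lambda = \pentsup{\varepsilon^{-3/4}}$), this error term vanishes as $\varepsilon \to 0$, uniformly in $\xi, \zeta, F$. Combined with the large deviation estimate \eqref{e:grandes-dev-1}, which gives $V_T \geq 12K/5$ with probability at least $1 - e^{-cK}$ for some $c > 0$ (using that $(1+2\lambda)\varepsilon \leq 1/5$ when $\varepsilon$ is small), we obtain $R_T \geq R_0 + 12K/5 \geq -K + 12K/5 = 7K/5$ on a high-probability event.

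To conclude the principal case, I would invoke the trajectory structure of $R_n$: its increments are $+1$ in the usual right step and $-2\lambda$ in a left jump. Since $2\lambda = O(\varepsilon^{-3/4}) \ll K/5 = \Omega(\varepsilon^{-3/2})$, the walk cannot skip over the window $J_+^2$, which has width $K/5$. As $R_0 \leq K < 7K/5$ (strictly below $J_+^2$) and $R_T \geq 7K/5$ (inside $J_+^2$ or to its right), by continuity there must exist $t \in \llbracket 0, T \rrbracket$ with $R_t \in J_+^2$; at such a $t$, $Y_t(J_+^2) \neq (0, \ldots, 0)$, as required.

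The main obstacle lies in the residual regime $R_0 > 8K/5$, in which the rightmost 1 of $Y_0$ lies strictly to the right of $J_+^2$ and the positively drifting walk $R_n$ cannot by itself produce a crossing. Following \cite{BraNeu}, this case is handled by invoking condition (ii): the absence of $\mathcal{G}$--patterns in $Y_0(J_+)$, combined with the iterated antecedent analysis of Lemmas \ref{l:antecedent-1}--\ref{l:antecedent-3}, is incompatible with the event that $J_+^2$ remains uniformly zero throughout $\llbracket 0, T \rrbracket$. Condition (iii), through \eqref{e:controle-nice}, is what ensures the niceness of the environment $X_t$ on which the sharper bound \eqref{e:borne-4} of Lemma \ref{l:antecedent-3} relies. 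The delicate part of writing up the proof is merging the two regimes into a single high-probability statement, uniform in $\xi, \zeta, F$.
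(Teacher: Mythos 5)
Your principal regime is handled essentially as in the paper: locate the rightmost $1$ guaranteed by condition (i), apply the comparison \eqref{l:compare-prob} together with the large deviation bound \eqref{e:grandes-dev-1}, and use the fact that the rightmost $1$ advances by at most one site per step, so it cannot reach level $7K/5$ without landing exactly on the left border of $J^2_+$. (The remark about the jump size $2\lambda$ versus the width $K/5$ of the window is not needed for this upward crossing; only the $+1$ upper bound on increments matters.) The genuine gap is in the residual regime. Leaving aside the fact that the split $R_0\le K$ / $R_0>8K/5$ is not exhaustive (the intermediate range is easy: $R_0\in J^2_+$ is trivial and $K<R_0<7K/5$ is covered by the same walk argument), the tool you propose for $R_0>8K/5$ does not do what you claim. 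Condition (ii) only forbids long blocks of the specific periodic patterns constituting $\mathcal{G}$, and the all-zero sequence is \emph{not} in $\mathcal{G}$; so condition (ii) is perfectly compatible with $Y_0(J_+)=(0,\dots,0)$ and provides no obstruction to $J^2_+$ remaining empty. Likewise, Lemmas \ref{l:antecedent-1}--\ref{l:antecedent-3} bound the probability that a large interval of $0$s is \emph{created} out of a configuration containing $1$s nearby at the previous step; they say nothing about an empty interval with no $1$s in its vicinity simply \emph{staying} empty, which is exactly the situation when the only $1$s near $J^2_+$ lie far to its right. This route cannot close the case.

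The ingredient you are missing is the screening observation the paper uses for its reduction. Since $Y_{t+1}(x)=0$ a.s. whenever $Y_t(x-1,x,x+1)=(0,0,0)$, the $1$s propagate at speed one; hence, as long as no $1$ has appeared inside $J^2_+$ (an interval of size about $K/5$), the configuration to the left of $J^2_+$ evolves, with the same update functions, exactly as it would if every $1$ initially to the right of $J^2_+$ were erased: the two sides cannot communicate without first producing a $1$ inside $J^2_+$. Consequently, either a $1$ enters $J^2_+$ from the right before time $T$ (and you are done), or you may replace $\zeta$ by its restriction to the left of $J^2_+$, whose rightmost $1$ sits at some $r\ge -K$ by condition (i), and your principal-regime argument applies verbatim. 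With this reduction the two regimes need not be "merged"; the residual one disappears.
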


\begin{proof}
If $Y_0(J^2_+) \neq (0,\ldots,0)$ we are done, so let us assume that $Y_0(J^2_+) = (0,\ldots,0)$. Since $(0,0)$ is good (property (i)), we have that $Y_0(I) \neq (0,\ldots, 0)$, and the rightmost $1$ left of $J^2_+$ in $Y_0$ is located at a site $r \in \Z$ satisfying $r \geq -K$. Until $1$s enter into $J^2_+$, the dynamics of $Y_t$ right and left of $J^2_+$ are independent, so we only have to establish the conclusion of the Lemma in the case where $r$ is the position of the rightmost $1$ in $Y_0$. In such a case, we can use  \eqref{l:compare-prob} and the corresponding large deviations estimate \eqref{e:grandes-dev-1} from Lemma \ref{l:grandes-dev} to establish the conclusion, since $r+12K/5 \geq -K+12K/5=7K/5$, which is the left border of $J^2_+$. To be specific, we obtain the following bound, valid as soon as $(1+2 \lambda) \varepsilon \leq 1/5$:
\begin{equation}\label{e:grosse-borne}\P_{\xi,\zeta}\left(\mbox{For some }0 \leq t \leq T, Y_t(J^2_+) \neq (0,\ldots,0)\right) \leq T \varepsilon^2+\pentsup{T/(\lambda+1)} \varepsilon +  e^{-3 \left(\frac{1/5}{1+2 \lambda}-\varepsilon/2 \right) K }.\end{equation}
 Using the explicit expressions of $K$, $T$, $\lambda$ as functions of $\varepsilon$, and the fact that $\varepsilon \leq 2(1-p)$, it is now apparent that the r.h.s. of \eqref{e:grosse-borne} goes to $0$ as $p \to 1$, uniformly with respect to $\xi,\zeta,F$.
\end{proof}

\begin{lemma}\label{l:peuplement-2}
If $(0,0)$ is good, $\lim_{p \to 1}\P_{\xi,\zeta}\left(Y_T(\llbracket K, 2K \rrbracket) \neq (0,\ldots,0)\right)=1$, uniformly w.r.t. $\xi,\zeta,F$.
\end{lemma}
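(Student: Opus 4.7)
The plan is to combine Lemma \ref{l:peuplement-1} with the machinery of Section \ref{s:dynamics-flip-preserve} (control of large $0$-intervals and the random-walk coupling for the rightmost $1$), along the lines of Section 4 of \cite{BraNeu}. By Lemma \ref{l:peuplement-1}, up to an event of probability going to $0$ as $p \to 1$, there exist a time $t^* \in \llbracket 0, T \rrbracket$ and a position $r^* \in J_+^2$ with $Y_{t^*}(r^*) = 1$. It therefore suffices to show that, conditionally on such a $(t^*, r^*)$, the event $\{Y_T(\llbracket K, 2K \rrbracket) \neq (0, \ldots, 0)\}$ has probability tending to $1$ as $p \to 1$, uniformly in $\xi, \zeta, F$.

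The core argument is a \emph{repositioning algorithm}: one constructs inductively a sequence $(t_k, r_k)_{k \geq 0}$ with $t_0 = t^*$, $r_0 = r^*$, $t_k$ strictly increasing and bounded by $T$, $r_k \in \llbracket K, 2K \rrbracket$, and $Y_{t_k}(r_k) = 1$, until either $t_k = T$ (success) or an advance fails. At each step one attempts to advance from $(t_k, r_k)$ to $(t_k+1, r_{k+1})$ by exploiting the fact that, under the deterministic dynamics $\phi$, a lone $1$ at $r_k$ produces $1$s at both $r_k \pm 1$, so one can generically pick $r_{k+1} \in \{r_k-1, r_k+1\} \cap \llbracket K, 2K \rrbracket$. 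The step fails only if either the random dynamics locally deviates from $\phi$ at one of the two candidate sites — controlled by $2(1-p)$ via \eqref{e:perturb-determ} — or the marker is driven to the boundary, in which case a reposition inside $\llbracket K, 2K \rrbracket$ is attempted and fails only when the bounds on $B_{s,n,t}$-type events from Section \ref{s:dynamics-flip-preserve} are violated.

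For the repositioning step one combines the random-walk control \eqref{e:grandes-dev-1}--\eqref{e:grandes-dev-2}, ensuring that the rightmost $1$ remains within a known region, with \eqref{e:borne-5} and \eqref{e:biborne-4a}--\eqref{e:biborne-4c} to rule out that a long $0$-interval forms near the marker. The former uses property (ii) of $(0,0)$ being good (no $\mathcal{G}$-pattern inside $J_\pm$), and the latter uses property (iii) (niceness of the environment $X$) propagated through \eqref{e:controle-nice}--\eqref{e:controle-nice-3} so that $H_{n,t}$ holds for all relevant $t \in [0, T]$. Summing the per-step failure probabilities over the at most $T = 4K$ steps and taking a union bound over the $O(K)$ possible reposition locations in $\llbracket K, 2K \rrbracket$ yields a total failure probability bounded by $K^2 \cdot \max(\varepsilon^2, u_1, u_2, u_3, v_2, v_3, \theta)$ plus the mixing errors, all of which tend to $0$ as $p \to 1$ by the parameter choices of Section \ref{s:param-list}.

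The main obstacle is the adaptivity of the repositioning: the location and time at which a failure could occur are random and depend on the past, so every estimate from Section \ref{s:dynamics-flip-preserve} must be invoked in its translation-invariant form and combined with a uniform union bound; carefully tracking the fact that the relevant conditional probabilities remain controlled throughout the time window $[0, T]$ (and not merely at a single time) is precisely what the apparatus of Section \ref{s:mixing}, and the definition of goodness in terms of $Q_{\ell_3}$, were designed to provide.
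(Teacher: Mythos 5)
Your high-level plan (reduce to Lemma \ref{l:peuplement-1}, then run a Bramson--Neuhauser-style repositioning argument using the rightmost-$1$ machinery and the large-$0$-interval bounds) is the right one, but the core quantitative step as you describe it fails. You propose to advance a single marked $1$ one time step at a time, with per-step failure probability controlled by $2(1-p)$ via \eqref{e:perturb-determ}, and then to sum these failures over the $T=4K$ steps and union-bound over $O(K)$ positions, arriving at a total error of order $K^2\cdot\max(\varepsilon^2,\dots)$. With the parameter choices of Section \ref{s:param-list} this does not go to zero: $K=10\pentsup{\varepsilon^{-3/2}}$ and $\varepsilon\leq 2(1-p)$, so already $T\cdot 2(1-p)\geq T\varepsilon/1 \gtrsim \varepsilon^{-1/2}\to+\infty$, and your final bound $K^2\varepsilon^2\approx 100\,\varepsilon^{-1}$ diverges as well. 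Even the sharper per-step probability $\varepsilon$ of the rightmost $1$ failing to move right (from \eqref{e:key-2}) is not summable over $T$ steps. This is precisely why the paper does \emph{not} require the tracked $1$ to advance at every step: the rightmost $1$ is coupled with the random walk $V$ of \eqref{l:compare-prob}, which is \emph{allowed} to take a backward jump $-2\lambda$ with probability $\varepsilon$ at each step, and the conclusion is rescued by the large-deviations estimates \eqref{e:grandes-dev-1}--\eqref{e:grandes-dev-2} showing that $V$ nevertheless ends up far to the right. The only per-step events that are union-bounded over all of $\llbracket 0,T\rrbracket$ are the $\varepsilon^2$-probability coupling failures ($T\varepsilon^2\approx 40\varepsilon^{1/2}\to 0$) and the $\pentsup{T/(\lambda+1)}\varepsilon$ term, both of which are small thanks to $\lambda=\pentsup{\varepsilon^{-3/4}}$.

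A second, related gap: your repositioning is triggered when ``the marker is driven to the boundary,'' whereas the paper's algorithm repositions at the stopping times $\sigma_i$ at which $Y_t(J^2_+)$ becomes identically $0$, exploits the independence of the dynamics left and right of $3K/2$ up to the next time a $1$ appears at $3K/2$, and uses the deterministic bound $\tau_i\geq\sigma_i+K/10$ (nearest-neighbour propagation) to cap the number of repositionings at $40$ --- a constant, not $O(K)$. The bounds \eqref{e:borne-6}--\eqref{e:borne-8} on the events $\{Y_{t-1}(J^1_+\cup J^3_+)\neq 0 \mbox{ and } Y_t(J^2_+)=(0,\dots,0)\}$ are then summed over $t\leq T$, and it is the smallness of $T\Psi_3$ (not of $T\cdot 2(1-p)$) that closes the argument. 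Without replacing your per-step deterministic-tracking estimate by the random-walk comparison plus large deviations, and without the constant bound on the number of repositionings, the proof does not go through.
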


\begin{proof}
Here is a sketch of the argument. By Lemma \ref{l:peuplement-1}, with high probability, there is a smallest $t \in \llbracket 0,T \rrbracket$ such that $Y_t(J^2_+) \neq (0,\ldots,0)$, that we denote by $\tau$.  Then let $\sigma_1$ be the first $t \in \llbracket \tau+1,T \rrbracket$ such that  $Y_t(J^2_+) = (0,\ldots,0)$ (if there exists such a $t$). Then the dynamics of $Y_t$ left and right of $3K/2$ are independent until the first time posterior to $\sigma_1$ where $Y_t(3K/2)=1$, denoted by $\tau_1$; moreover, up to time $\tau_1$, these dynamics coincide with those comprising no $1$ right (resp. left) of $3K/2$ at time $\sigma_1$. If $Y_{\sigma_1}(J^1_+ \cup J^3_+) \neq 0$, we can use \eqref{l:compare-prob} and the corresponding large deviations estimates to show that, with high probability, $Y_t(\llbracket K,2K \rrbracket) \neq (0,\ldots,0)$ for $t=\sigma_1+1,\ldots, \tau_1$, while (deterministically) $\tau_1 \geq \sigma_1+K/10$ since $1$s move by nearest-neighbour steps. 
One then defines $\sigma_2$ as the first time posterior to $\tau_1$ such that  $Y_t(J^2_+) = (0,\ldots,0)$, and then define $\tau_2,\sigma_3,\tau_3,\sigma_4,\ldots$ accordingly, iterating the argument. Thus, the two types of unfavorable events that can make the repositioning algorithm fail are:  
\begin{itemize}
\item[\textbullet] failure to have $Y_{\sigma_i}(J^1_+ \cup J^3_+) \neq 0$ while $Y_{\sigma_i}(J^2_+) = (0,\ldots,0)$;
\item[\textbullet] failure of the righmost $1$ left of $J^2_+$ in $Y_{\sigma_i}$ (or, if there is none, of the leftmost $1$ right of $J^2_+$) to remain within $\llbracket K,2K\rrbracket$ until there is again a $1$ at $3K/2$ (or until $T$, if this does not happen before time $T$), we denote this event by $A_i$.
\end{itemize}    
Moreover, there can be at most $\frac{T}{(K/10)}=40$ indices $i$ such that $\sigma_i \leq T$.
Thus, we obtain the following bound: 
\begin{align}\label{e:reposition}  \P_{\xi,\zeta}\left(Y_T(\llbracket K, 2K \rrbracket) \neq (0,\ldots,0)\right) & \leq \\ \nonumber & \P_{\xi,\zeta}\left(\mbox{For all }0 \leq t \leq T, Y_t(J^2_+) = (0,\ldots,0)\right) \\ \nonumber
+ & \P_{\xi,\zeta}(\mbox{For some } 1 \leq i \leq 40, \ A_i \mbox{ occurs} ) \\ \nonumber + & \P_{\xi,\zeta}\left(\mbox{For some } 1 \leq t \leq T  ,  \   Y_{t-1}(J^1_+ \cup J^3_+) \neq 0 \mbox{ and }Y_{t}(J^2_+) = (0,\ldots,0) \right).
\end{align}
By the union bound and  \eqref{l:compare-prob}, 
$$ \P_{\xi,\zeta}(\mbox{For some } 1 \leq i \leq 40, \ A_i \mbox{ occurs} ) \leq 40 \left( T \varepsilon^2+\pentsup{T/(\lambda+1)} \varepsilon  +  \P_{\xi,\zeta}(V_n \leq -K/5 \mbox{ for some }1 \leq n \leq T) \right),$$
and, thanks to \eqref{e:grandes-dev-2}, we deduce that, as soon as $(1+2 \lambda) \varepsilon \leq 1/40$, 
\begin{equation}\label{e:un-bout}  \P_{\xi,\zeta}(\mbox{For some } 1 \leq i \leq 40, \ A_i \mbox{ occurs} ) \leq 40 \left( T \varepsilon^2+\pentsup{T/(\lambda+1)} \varepsilon +  T e^{-3 \left(\frac{1/40}{1+2 \lambda}-\varepsilon/2 \right) K } \right).\end{equation}

On the other hand, using the assumption that $(0,0)$ is good (property (ii)) together with Lemma \ref{l:antecedent-1}, we have that 
\begin{equation}\label{e:borne-6}\P_{\xi,\zeta}\left( Y_{0}(J^1_+ \cup J^3_+) \neq 0 \mbox{ and }Y_{1}(J^2_+) = (0,\ldots,0) \right) \leq\max\left(\varepsilon^2 , u_1\right) =: \Psi_1,\end{equation}
Now using \eqref{e:controle-nice} and the assumption that $(0,0)$ is good (property (iii)), the probability that there exists a $0 \leq t \leq T-1$ such that $X_t(J)$ is not $\ell_3-$nice, is bounded above by  
$\theta + 3 K (e_{K,\ell_3} + |J|d_K)$. Moreover, $\ell_3-$nice implies $\ell_2-$nice since $\ell_3 \leq \ell_2$. In particular, as soon as $\theta + 3 K (e_{K,\ell_3} + |J|d_K)<1$, the assumption that $(0,0)$ is good implies that $X_0(J)$ is $\ell_2-$nice. Using again the assumption that $(0,0)$ is good  (property (ii)) together with \eqref{e:biborne-3}, Lemma \ref{l:antecedent-3} and translation invariance, we deduce that 
\begin{equation}\label{e:borne-7}\P_{\xi,\zeta}\left( Y_{1}(J^1_+ \cup J^3_+) \neq 0 \mbox{ and }Y_{2}(J^2_+) = (0,\ldots,0) \right) \leq  \max\left(\varepsilon^2 , u_1\right) + 3 u_2 + 8 v_2 =: \Psi_2.\end{equation}
 Similarly, using \eqref{e:borne-3}, Lemma \ref{l:antecedent-3} and \eqref{e:controle-nice}, we deduce that, for $t=3,\ldots, T$, 
\begin{equation}\label{e:borne-8}\P_{\xi,\zeta}\left( Y_{t-1}(J^1_+ \cup J^3_+) \neq 0 \mbox{ and }Y_{t}(J^2_+) = (0,\ldots,0) \right) \leq \Psi_3,\end{equation}
where $\Psi_3 :=  \max\left(\varepsilon^2 , u_1 \right) + 3 u_2 + 4 u_3+
 8 v_2 + 16 v_3 + 24 (\theta + 3 K (e_{K,\ell} + |J|d_K))$. By the union bound, 
\begin{equation}\label{e:un-deuxieme-bout} \P_{\xi,\zeta}\left(\mbox{For some } 1 \leq t \leq T  ,  \   Y_{t-1}(J^1_+ \cup J^3_+) \neq 0 \mbox{ and }Y_{t}(J^2_+) = (0,\ldots,0) \right) \leq \Psi_1+\Psi_2+(T-2) \Psi_3.\end{equation}

Putting together \eqref{e:reposition}, the conclusion of Lemma \ref{l:peuplement-1}, \eqref{e:un-bout} and \eqref{e:un-deuxieme-bout}, we deduce that $\P_{\xi,\zeta}\left(Y_T(\llbracket K, 2K \rrbracket) \neq (0,\ldots,0)\right)$ goes to $0$ as $p \to 1$, uniformly with respect to $\xi,\zeta,F$.
\end{proof}

\begin{lemma}\label{l:peuplement-3}
If $(0,0)$ is good, $\lim_{p \to 1}\P_{\xi,\zeta}\left(Y_{T}(J_{+}) \notin \mathcal{G} \right)=1$, where the limit is uniform with respect to $\xi,\zeta,F$.
\end{lemma}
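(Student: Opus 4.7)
The strategy is to express the event $\{Y_T(J_+) \in \mathcal{G}\}$ via a union bound as the event that $Y_T$ coincides with some $s \in \overline{\d{1}} \cup \overline{\d{0}\d{1}}$ on a subinterval of $J_+$ of size $\ell_1$, or coincides with some $s \in \overline{\d{0}\d{1}\d{1}\d{1}}$ on a subinterval of size $\ell_2$. By translation invariance (shifting by $-3K/2$ so that $J_+$ is centered at the origin, sitting inside the reference interval $\llbracket -2\ell_0, +2\ell_0\rrbracket$ of Section \ref{s:dynamics-flip-preserve}), each such probability is majorized by a $C_{s,n,t}$-type quantity with $t-n = T$. The estimates \eqref{e:biborne-4a}, \eqref{e:biborne-4b}, \eqref{e:biborne-4c} then apply (with $t = T+1$ for the $\ell_1$-patterns, and $t = T+2$ for the $\ell_2$-pattern).

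Summing over the seven possible sequences $s$ (one in $\overline{\d{1}}$, two in $\overline{\d{0}\d{1}}$, four in $\overline{\d{0}\d{1}\d{1}\d{1}}$), one obtains a bound of the form
$$\P_{\xi,\zeta}(Y_T(J_+) \in \mathcal{G}) \;\leq\; 3 u_2 + 8 u_3 + 8 v_2 + 32 v_3 + 8 \P_{\xi,\zeta}(H_{2,T+1}^c) + 16 \P_{\xi,\zeta}(H_{3,T+1}^c) + 16 \P_{\xi,\zeta}(H_{3,T+2}^c),$$
where each $H_{n,\cdot}$ involves the failure of some $X_t$ with $t \in \{T-2, T-1\} \subset \{0, \ldots, 4K-1\}$ to be $\ell_n$-nice. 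Property (iii) of $(0,0)$ being good gives $Q_{\ell_3}(X_0) \leq \theta$; combining this with \eqref{e:controle-nice} and the fact that $\ell_3$-niceness implies $\ell_2$-niceness (since $\ell_3 \leq \ell_2$), each such niceness-failure probability is bounded by $\theta + 3K(e_{K,\ell_3} + |J|d_K)$.

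The parameter choices of Section \ref{s:param-list}, together with $\varepsilon \leq 2(1-p)$, ensure that each of $u_2, u_3, v_2, v_3, \theta, K(e_{K,\ell_3} + |J|d_K)$ tends to $0$ as $p \to 1$, uniformly in $\xi, \zeta, F$, which yields the claim. The main bookkeeping obstacle is that the event $H_{n, t}$ is phrased in terms of $\ell_n$-niceness on the reference interval $\llbracket -2\ell_0, +2\ell_0\rrbracket$, which after translation by $3K/2$ is strictly larger than $J_+$; inspection of the proof of Lemma \ref{l:antecedent-3} however shows that only niceness of the candidate matching sub-intervals $\llbracket g, h\rrbracket$ is actually used, and these all lie inside $J_+ \subset J$, so that $\ell_3$-niceness of $X_t$ on $J$ (which is what \eqref{e:controle-nice} provides) is sufficient, exactly as in the handling of \eqref{e:borne-7}--\eqref{e:borne-8} within the proof of Lemma \ref{l:peuplement-2}.
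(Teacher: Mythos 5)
Your argument is correct and follows essentially the same route as the paper: a union bound over the periodic patterns defining $\mathcal{G}$, translation invariance to place $J_+$ inside the reference window of Section \ref{s:dynamics-flip-preserve}, and the estimates \eqref{e:biborne-4a}--\eqref{e:biborne-4c} with $t-n=T$. The only divergence is that you control the niceness-failure terms $\P_{\xi,\zeta}(H_{n,\cdot}^c)$ via goodness condition (iii) combined with \eqref{e:controle-nice}, whereas the paper invokes the mixing bound \eqref{e:controle-nice-2} directly (requiring no goodness assumption and yielding $2(e_{K,\ell_3}+|J|d_K)$ in place of $\theta+3K(e_{K,\ell_3}+|J|d_K)$); both bounds vanish as $p\to1$ uniformly in $\xi,\zeta,F$, so the conclusion is unaffected.
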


\begin{proof}
Using \eqref{e:controle-nice-2}, we have that  
$$\P_{\xi}\left( \bigcup_{t=T-2}^{T-1} \{X_t(J+2K)\mbox{ is not $\ell_3-$nice} \}\right) \leq 2( e_{K,\ell_3} + |J|d_K).$$
Using translation invariance and respectively \eqref{e:biborne-4a}, \eqref{e:biborne-4b}, \eqref{e:biborne-4c}, we deduce the following estimates:
$$\P_{\xi,\zeta}\left(Y_{T}(J_{+}) \mbox{ coincides with $\overline{\d{1}}$ on an interval of size $\geq \ell_1$}  \right)  \leq 
u_2+ 4v_2  +  8( e_{K,\ell_3} + |J|d_K ).$$
$$\P_{\xi,\zeta}\left(Y_{T}(J_{+}) \mbox{ coincides with $\overline{\d{0}\d{1}}$ on an interval of size $\geq \ell_1$}  \right)  \leq  2u_2 +4 u_3 + 4 v_2 + 16  v_3 +  40( e_{K,\ell_3} + |J|d_K ).$$
$$\P_{\xi,\zeta}\left(Y_{T}(J_{+}) \mbox{ coincides with $\overline{\d{0}\d{1}\d{1}\d{1}}$ on an interval of size $\geq \ell_2$}  \right) \leq  4u_3 + 16 v_3 + 32 ( e_{K,\ell_3} + |J|d_K ).$$
Each of the three upper bounds goes to $0$ as $p \to 1$, uniformly with respect to $\xi,\zeta,F$, which establishes the conclusion.
\end{proof}

All the ingredients needed to prove Proposition \ref{p:good} are now available.
\begin{proof}[Proof of Proposition \ref{p:good}]
Assuming that $(0,0)$ is good, thanks to Lemma \ref{l:peuplement-2}, we see that condition (i) is satisfied for $(1,1)$ with probability going to $1$ as $p \to 1$, uniformly with respect to $\xi,\zeta,F$.
Lemma \ref{l:peuplement-3} yields the same conclusion for condition (ii). As for condition (iii), the conclusion stems from \eqref{e:controle-nice-3}.
\end{proof}

We are now in a position to prove Theorem \ref{t:letheo}. Observe that, using a completely analogous argument as the one leading to Proposition \ref{p:good}, we obtain that, if that $(0,0)$ is good, the probability that $(-1,1)$ is good goes to $1$ as $p \to 1$, uniformly with respect to $\xi,\zeta,F$. (Here, $J_-$ plays the role of $J_+$, and we can define $J_-^1,J_-^2, J_-^3$ symmetrically to  $J_+^1,J_+^2, J_+^3$).
By the union bound, we deduce that, if $(0,0)$ is good, the probability that both $(+1,1)$ and $(-1,1)$ are good, goes to $1$ as $p \to 1$, uniformly with respect to $\xi,\zeta,F$.
Let us write this a bit more formally as follows: if $(0,0)$ is good, then
\begin{equation}\label{e:unif-zero} \P_{\xi,\zeta}\left( (+1,1) \mbox{ and }(-1,1) \mbox{ are good} \right) \geq 1 - \iota(p),\end{equation}
where $\iota$ is a function of $p$ only, satisfying $\lim_{p \to 1} \iota(p)=0$.
For $k \geq 0$, let $\F_{k}$ be the $\sigma-$algebra generated by the events of the form $\{ (z,n) \mbox{ is good} \}$ for $z \in \Z$ and $0 \leq n \leq k$. By the Markov property of the dynamics and translation invariance, we deduce from \eqref{e:unif-zero} that, for all $z \in \Z$ and $k \geq 0$ on the event $\{ (z,k) \mbox{ is good} \}$, 
\begin{equation}\label{e:unif-general} \P_{\xi,\zeta}\left((z+1,k+1) \mbox{ and }(z-1,k+1) \mbox{ are good}  | \F_{n} \right) \geq 1 - \iota(p).\end{equation}
Moreover, conditional on $\F_k$, the event $\{ (z-1,k+1) \mbox{ and }(z+1,k+1) \mbox{ are good} \}$ is independent\footnote{The condition that $(z,k+1)$ is good involves at most the values of $Y_{(k+1)T}(zK+\llbracket-K,K\rrbracket)$ (condition (i)), $Y_{(k+1)T}(zK+\llbracket-2K,2K\rrbracket)$ (condition (ii)), and $Y_{(k+1)T}(zK+\llbracket-3K,3K\rrbracket)$ (condition (iii)). So the event that both $(z-1,k+1)$ and $(z+1,k+1)$ are good involves at most the value of $Y_{(k+1)T}(zK+\llbracket-4K,4K\rrbracket)$. Conditional on $X_{kT},Y_{kT}$, the nearest-neighbour character of the dynamics shows that we have independence as soon as $|zK-z'K| \geq 2(4K+T)=16K$.} from the family of events $\{ (z'-1,k+1) \mbox{ and }(z'+1,k+1) \mbox{ are good} \}$, where $z' \in \Z \setminus \llbracket z-15,z+15 \rrbracket$.
As a consequence, combining \eqref{e:unif-general} and the classic result of \cite{LigSchSta} on domination by product measures, we deduce that, for $p$ sufficiently close to $1$, if $(0,0)$ is good, the random field $\left[\un( (z,k) \mbox{ is good} )\right]_{z \in \Z, k \geq 0}$ stochastically dominates a directed percolation process on $\Z^2$ in which, with probability $\kappa(p)$, $(x,k)$ is connected to both $(x-1,k+1)$ and $(x+1,k+1)$, independently of the other sites, where $\lim_{p \to 1}\kappa(p)=1$. In turn, such a process stochastically dominates a usual directed percolation process, in which, $(x,k)$ has a probability $\kappa(p)$ of being connected to  $(x-1,k+1)$, and, independently, a probability $\kappa(p)$ of being connected to  $(x+1,k+1)$. Using the same argument as in \cite{BraNeu} (see \cite{DurNeu, Dur} for the relevant results on oriented percolation), we deduce that, choosing an initial condition $\xi,\zeta$ such that $(0,0)$ is good, 
\begin{equation}\label{e:perco-orient} \liminf_{n \to +\infty} \P_{\xi,\zeta}( (0,2n) \mbox{ is good}   ) > 0.\end{equation}
 Now, let $$E_{x,t} =\left\{ \exists \ \xi_1, \xi_2 \in \{0,1\}^{\Z} \mbox{ such that } X_t^{\xi_1}(x) \neq X_t^{\xi_2}(x)  \right\}.$$ 
By definition (condition (i)), we have that $\{ (z,k) \mbox{ is good }\} \subset \bigcup_{x \in zK+I} E_{x,kT}$, so that, by the union bound, 
$\P_{\xi,\zeta}((z,k) \mbox{ is good }) \leq \sum_{x \in zK+I} \P(E_{x,kT})$. 
By translation invariance, $\P(E_{x,kT})$ does not depend on $x$, so that we have the bound
$\P_{\xi,\zeta}((z,k) \mbox{ is good }) \leq  (2K+1) \P(E_{0,kT})$. In view of \eqref{e:perco-orient}, we deduce that 
\begin{equation}\label{e:la-fin} \liminf_{n \to +\infty} \P(E_{0,2nT})  \geq \frac{1}{2K+1} \liminf_{n \to +\infty} \P_{\xi,\zeta}( (0,2n) \mbox{ is good}   ) > 0,\end{equation}
which yields the conclusion of Theorem \ref{t:letheo}, since $\limsup_{t \to +\infty} \P(E_{0,t}) \geq  \liminf_{n \to +\infty} \P(E_{0,2nT})$.

\bibliographystyle{siam}
\bibliography{no-CFTP-PCA}

\begin{thebibliography}{10}

\bibitem{Ber}
{\sc J.~B\'erard}, {\em Coupling from the past for exponentially ergodic
  one-dimensional probabilistic cellular automata}, arXiv:2106.07219,  (2021).

\bibitem{BraNeu}
{\sc M.~Bramson and C.~Neuhauser}, {\em Survival of one-dimensional cellular
  automata under random perturbations}, Ann. Probab., 22 (1994), pp.~244--263.

\bibitem{DomKin}
{\sc E.~Domany and W.~Kinzel}, {\em Equivalence of cellular automata to {I}sing
  models and directed percolation}, Phys. Rev. Lett., 53 (1984), pp.~311--314.

\bibitem{Dur}
{\sc R.~Durrett}, {\em Oriented percolation in two dimensions}, Ann. Probab.,
  12 (1984), pp.~999--1040.

\bibitem{Dur2}
\leavevmode\vrule height 2pt depth -1.6pt width 23pt, {\em Lecture notes on
  particle systems and percolation}, The Wadsworth \& Brooks/Cole
  Statistics/Probability Series, Wadsworth \& Brooks/Cole Advanced Books \&
  Software, Pacific Grove, CA, 1988.

\bibitem{DurNeu}
{\sc R.~Durrett and C.~Neuhauser}, {\em Epidemics with recovery in {$D=2$}},
  Ann. Appl. Probab., 1 (1991), pp.~189--206.

\bibitem{FerLouNar}
{\sc R.~Fern\'{a}ndez, P.-Y. Louis, and F.~R. Nardi}, {\em Overview: {PCA}
  models and issues}, in Probabilistic cellular automata, vol.~27 of Emerg.
  Complex. Comput., Springer, Cham, 2018, pp.~1--30.

\bibitem{Kin}
{\sc W.~Kinzel}, {\em Phase transitions of cellular automata}, Z. Phys. B, 58
  (1985), pp.~229--244.

\bibitem{LigSchSta}
{\sc T.~M. Liggett, R.~H. Schonmann, and A.~M. Stacey}, {\em Domination by
  product measures}, Ann. Probab., 25 (1997), pp.~71--95.

\bibitem{MarSabTaa}
{\sc I.~Marcovici, M.~Sablik, and S.~Taati}, {\em Ergodicity of some classes of
  cellular automata subject to noise}, Electron. J. Probab., 24 (2019),
  pp.~Paper No. 41, 44.

\bibitem{McD}
{\sc C.~McDiarmid}, {\em Concentration}, in Probabilistic methods for
  algorithmic discrete mathematics, vol.~16 of Algorithms Combin., Springer,
  Berlin, 1998, pp.~195--248.

\bibitem{TooVasStaMitKurPir}
{\sc A.~Toom, N.~Vasilyev, O.~Stavskaya, L.~Mityushin, G.~Kurdyumov, and
  S.~Pirogov}, {\em Discrete local markov systems}, in Stochastic Cellular
  Systems: ergodicity, memory, morphogenesis, R.~Dobrushin, V.~Kryukov, and
  A.~Toom, eds., Manchester University Press, 1990.

\bibitem{vdBSte}
{\sc J.~van~den Berg and J.~E. Steif}, {\em On the existence and nonexistence
  of finitary codings for a class of random fields}, Ann. Probab., 27 (1999),
  pp.~1501--1522.

\bibitem{Vas}
{\sc L.~N. Vasershtein}, {\em Markov processes over denumerable products of
  spaces describing large system of automata}, Problems Inform. Transmission, 5
  (1969), pp.~47--52.

\end{thebibliography}

\end{document}